\newtheorem{theorem}{Theorem}[section]
\newtheorem{definition}[theorem]{Definition}
\newtheorem{lemma}[theorem]{Lemma}
\newtheorem{proposition}[theorem]{Proposition}
\newtheorem{corollary}[theorem]{Corollary}
\theoremstyle{remark}
\newtheorem{remark}[theorem]{Remark}
\newcommand\addtag{\refstepcounter{equation}\tag{\theequation}}
\newcommand{\spaces}[3]{\ensuremath{#1_{#2}, \ldots , #1_{#3}}} %Spaces  1_2,...,1_3
\newcommand{\cartesian}[3]{\ensuremath{#1_{#2} \times \cdots \times #1_{#3}}} %Cartesian product  1_2x...x1_3
\newcommand{\Tensor}[4]{\ensuremath{#1_{#2}}{\otimes}_{#4}\cdots{\otimes}_{#4}#1_{#3}} %1_2 otimes_3...otimes_3 1_4    alpha-tensor product of n spaces.
\newcommand{\CTensor}[4]{\ensuremath{#1_{#2}}{\hat{\otimes}}_{#4}\cdots{\hat{\otimes}}_{#4}#1_{#3}}
\tikzset{
commutative diagrams/.cd,
arrow style=tikz,
diagrams={>=latex}}
\begin{document}

\title{The Segre cone of Banach spaces and multilinear operators}

\author{Maite Fern\'{a}ndez-Unzueta}

%\thanks{The  author was partially supported by  CONACYT 182296}

%\subjclass[2010]{46B28, 47H60, 47L22, and  15A03}

\address{Centro de Investigaci\'{o}n en Matem\'{a}ticas (Cimat), A.P. 402 Guanajuato, Gto., M\'{e}xico} \email{maite@cimat.mx}

\keywords{ Segre cone, multilinear mappings,  Lipschitz mappings, tensor products,  Auerbach basis,    Banach spaces. }
%\subjclass[2010]{}
\subjclass[2010]{47H60; 47L22; 15A69; 46B28; 46G25}

\begin{abstract} We prove that any pair of    reasonable cross norms  defined on
the tensor product  of $n$ Banach spaces  induce $(2k)^{n-1}$-Lipschitz equivalent metrics (and thus, a unique topology)  on  the set $S^k_{X_1,\ldots, X_n}$ of vectors of rank  $\leq k$. With this, we  define the Segre cone of Banach spaces, $\Sigma_{X_1,\ldots, X_n},$ and  state when $S^k_{X_1,\ldots, X_n}$ is  closed. We introduce an auxiliary mapping (a $\Sigma$-operator) that allows us to study  multilinear mappings with a  geometrical point of view. We use  the isometric correspondence between 
multilinear mappings  and Lipschitz $\Sigma$-operators,  to have a strategy to generalize ideal properties from the linear to the multilinear settitng. 
\end{abstract}
\maketitle

\section{Introduction}\label{sec: intro}

    Many different efforts  have  been made to extend the  well known theory of ideals of linear operators on Banach spaces to the context of multilinear mappings. Among others, the following ideals had been generalized:  compact and  weakly compact operators \cite{AronGalindo97};   absolutely $p$-summing operators \cite{Pietsch83} (other generalizations of this ideal may be found in  \cite{A FU} and \cite{Calis Pelle} and  references therein);  unconditionally converging operators \cite {BomFUnVil} or Hilbert-Schmidt operators \cite{Matos03}.

    In this paper we  tackle  the  problem of generalizing from the  linear to the multilinear theory,  by means of a   geometrical tool  that  we define as   $\Sigma$-operators. These are mappings that make visible      the geometric richness occurring  where the metric, the tensor and the multilinear structures merge.  The  strategy proposed to generalize the linear theory  begins by factorizing   a multilinear mapping  between vector spaces, $T\in {L}(\spaces{X}{1}{n};Y)$,   in the following way:
     \[
\begin{tikzcd}\label{diagram}
 & X_1\times\ldots \times X_n \arrow{ld}[swap]{\otimes}
                                \arrow{rd}{T} &\\
    \Sigma_{X_1,\ldots, X_n}  \arrow{rr}{f_T}
                                 \arrow[hook]{rd}{i}   & & Y \\
              & \Tensor{X}{1}{n}{} \arrow{ru}{\hat{T} }   & \addtag
\end{tikzcd}
\]
     where ${\Sigma}_{X_1,\ldots,X_n}:=\{\Tensor{x}{1}{n}{}\in  \Tensor{X}{1}{n}{}; \; x_i\in X_i\}
   $ denotes  the set of decomposable tensors, $\hat{T}$ is  the   linear mapping associated to ${T}$ and $f_T:=\hat{T}_{|_{{\Sigma}_{X_1,\ldots,X_n}}}$.
   Essentially, the proposal  will consist in  studying   $f_T$  ($\Sigma$-operators,  Defnition \ref{def: sigma-oper}) instead of $T$,  in the Banach space category.
   This paradigm shift  makes us  gain    access to
  geometric features  of  the domain, ${\Sigma}_{X_1,\ldots,X_n}$,  that  remain   hidden  in the   multilinear  expression, whose   domain,  $\cartesian{X}{1}{n}$, has  much a poorer structure.
   The paper is organized in two parts: in Section \ref{sec: Tensors Sk} we carry    out  an accurate study of  tensor metrics  on the set of decomposable tensors,  ${\Sigma}_{X_1,\ldots,X_n}$. This allows such  domains  to be  well-defined.  $\Sigma$-operators and the way  the proposal works  are established   in Section \ref{sect: morphisms}.

       The main result   in Section \ref{sec: Tensors Sk} is Theorem \ref{thm: fixed rank  cone metrics}. There  we prove that for a fixed  $k$ and for any pair of reasonable cross norms,  $\alpha, \beta$, defined on
the tensor product $\Tensor{X}{1}{n}{}$ of Banach spaces, the metrics  induced on $ \mathcal{S}^k_{\spaces{X}{1}{n}}$, the set of tensors of range $\leq k$,  are $(2k)^{n-1}$-Lipschitz equivalent.  These constants provide an asymptotic estimate of the discrepancy between $\alpha$ and $\beta$  on the completed tensor spaces. A consequence of Theorem \ref{thm: fixed rank  cone metrics} is that   all reasonable cross norms induce a  unique  topology on  $ \mathcal{S}^k_{\spaces{X}{1}{n}}$. With this,  we  define de  the {\sl Segre cone} of $n$-spaces in the category of Banach spaces (Definition \ref{def: Sigma}), and settle  its basic geometrical properties. A multilinear Auerbach's Theorem, as well as the estimate of the Banach-Mazur distance
 $ d_{BM}(\Tensor{F}{1}{n}{\pi}, \Tensor{F}{1}{n}{\epsilon})\leq \Pi_{j=1}^{n-1}d_j,$ in terms of the dimensions $d_j$ of the subspces $F_j$, are given.

   We finish this section proving  that  the Segre  cone,  $\Sigma_{X_1,\ldots, X_n}$, and   $S^k_{X_1, X_2}$  are  complete metric spaces (Propositions \ref{prop: cSegre cerrado en tensor} and  \ref{prop: Sr Hilbert spaces}, resp.), while  $S^k_{X_1,\ldots, X_n}$  is not, if  $k\geq 2$ and   $n\geq 3$    (Proposition \ref{prop: Sr not closed}). These  results are related to  the so  called low rank approximation problem which, in the finite dimensional case, has been deeply studied in  \cite{DSilva Lim}.  In  \cite{Comon}, \cite{Land08} and  \cite{Land} one may  find    tensor decomposition problems  treated   both from a theoretical point of view and in applications, arising from varied sources as computer science,    approximation theory   or  algebraic complexity theory.

 In Section \ref{sect: morphisms} we outline our  general approach to study  multilinear mappings: it is based on Theorem  \ref{thm: continuity diagram}, where  we prove that a multilinear mapping $T$ is bounded if and only if its associated $\Sigma$-operator, $f_T$, (Definition \ref{def: sigma-oper})  is Lipschitz on the Segre cone. Thus, an  analogous  diagram to (\ref{diagram}) holds   in the Banach space category. In light of this general scheme,  the generalization process we propose is as follows:

 Given a  boundedness  condition on linear  operators,$\{S:X\rightarrow Y\}$,  it is possible to write    the analogous Lipschitz condition on $\Sigma$-operators,  $\{f:\Sigma_{\spaces{X}{1}{n}}\rightarrow Y\}$.
Finally, the relation $T(\spaces{x}{1}{n})=f_T(\Tensor{x}{1}{n}{})$ and diagram (\ref{diagram}) give rise, naturally, to the desired  generalized condition on multilinear operators.

To exemplify this,  in the last Section we present  the case of   $p$-summabilibty  developed in the forthcomming paper \cite{A FU}.
 It is worth mentioning that this  approach gives rise to a new notion of multilinear $p$-summability   for which    the three fundamental equivalences of the notion of a $p$-summing linear operator (namely, its local definition,  a domination by an $L_p$-norm, and a factorization through a subset of an $L_p$-space) hold in a natural way.

 \section{Tensors of rank $\leq k$ in Banach spaces }\label{sec: Tensors Sk}

 The study of  {\sl tensors} (elements in a tensor product space, \cite[Definition 2.3.1.3]{Land}) frequently entails   the study of  an associated topological structure. Thus, for example, the Zariski topology is inherent to the study of the secant varieties of the Segre variety (the projectivization of the Segre cone).  In like manner,  a  normed  structure is assumed on the spaces when  dealing  with tensor approximation problems, as is the case of the Frobenius norm in the celebrated Eckart-Young's Theorem on approximation of a matrix by lower rank matrices \cite{EckartYoung}.

  When the spaces are finite dimensional, all norms  induce  the same  topology. As noted in \cite[Section 4]{DSilva Lim}, this property allows,  in many cases,   to choose an appropriate norm to work with.

In the case of infinite dimensional spaces  it is no longer  possible to argue in this way, since  it is possible to define   infinitely many non-equivalent norms on a vector space.

In this section we prove that,  in spite of this, the closure of tensors of a fixed rank remains  independent of the norm chosen, provided it  is a reasonable cross norm.

\

Let   $X_1,\ldots,X_n$ be normed spaces  over $\mathbb{R}$ or  $\mathbb{C}$. A norm $\alpha$ on the vector space $\Tensor{X}{1}{n}{}$ is said to be a {\bf reasonable cross norm} if it has the following properties:
  \begin{enumerate} \item $\alpha(x_1\otimes\dots\otimes x_n)\leq \|x_1\|\cdots \|x_n\|$ for every $x_i\in X_i;\; i=1,\ldots n. $
  \item For every $x_i^*\in X_i^*$, the linear functional $x_1^*\otimes\dots\otimes x_n^*$ on $\Tensor{X}{1}{n}{}$ is bounded, and $\|x_1^* \otimes\dots\otimes x_n^*\|\leq \|x_1^*\|\cdots \|x_n^*\|.$
  \end{enumerate}

   A norm $\alpha$ on the vector space $\Tensor{X}{1}{n}{}$ is a reasonable cross norm if and only if for  every  $ z\in \Tensor{X}{1}{n}{}$
    $$ \epsilon(z)\leq \alpha(z)\leq \pi(z),$$
  where $\epsilon$ and $\pi$ are, respectively,  the injective and projective norms on   $\Tensor{X}{1}{n}{}$ (for the original proof, see \cite[Theorem 1 p.8]{Groth}; see also \cite[Proposition 6.1]{Ryan-libro}, \cite[Theorem 1.1.3]{DieFouSwa}). Thus,  inequalities   (1) and (2) are  equalities.

  The  normed space determined by a reasonable cross norm  $\alpha$ on the space  $\Tensor{X}{1}{n}{}$  will be  denoted by $ \Tensor{X}{1}{n}{\alpha}  $ and its completition by    $ \CTensor{X}{1}{n}{\alpha}$.

\

  Given    vector  spaces,  $X_1,\ldots,X_n$,   the  {\sl rank}  of a tensor  $z\in \Tensor{X}{1}{n}{}$, is defined as:
       \begin{equation*}
        r_z:=\min \{r\in \mathbb{N}; \; z=\sum_{i=1}^r \Tensor{x^i}{1}{n}{};\;  x_i \in X_i;\; i=1,\ldots,n\}.
       \end{equation*}
       Any expression of the form  $z=\sum_{i=1}^r \Tensor{x^i}{1}{n}{}$ is called a \sl{ tensor decomposition of $z$}. We say that it is \sl{minimal}  when  $r=r_z$.
   For $r\in\mathbb{N}$,  we will  denote
      $$\mathcal{S}^r_{X_1,\ldots,X_n}:=\{z\in  \Tensor{X}{1}{n}{};\,\, r_z\leq r\}. $$

     The set  $\mathcal{S}^1_{X_1,\ldots,X_n}$ is the  \sl{Segre cone} of the spaces  $X_1,\ldots,X_n$, described in (\ref{diagram}).

 \hspace{1cm}

      From now on, the spaces    $\spaces{X}{1}{n}$ will be assumed to be Banach spaces.  If $\alpha$ is  a reasonable cross  norm  on $\Tensor{X}{1}{n}{}$, we will say that the rank of $z$ is infinite when  $z\in \CTensor{X}{1}{n}{\alpha}\setminus  \Tensor{X}{1}{n}{}$.

   For each fixed  $r\in \mathbb{N}$,
    $d_{\alpha}$  will denote   the  metric induced by the cross norm $\alpha$, on  the subset    $\mathcal{S}^r_{\spaces{X}{1}{n}}$ of tensors of rank $\leq{r}$. We will refer to these metrics as {\bf tensor metrics}.

  \begin{theorem}\label{thm: fixed rank  cone metrics} Let  $\spaces{X}{1}{n}$ be Banach  spaces, $r \in \mathbb{N}$   and   $\alpha$ and  $\beta$ reasonable cross norms  on  $\Tensor{X}{1}{n}{}$.
     Then, the following  metric spaces are Lipschitz equivalent:   $$(\mathcal{S}^r_{\spaces{X}{1}{n}}, d_{\alpha}) \hspace{.2cm}  \stackrel{Id}{\simeq}  \hspace{.2cm}  (\mathcal{S}^r_{\spaces{X}{1}{n}}, d_{\beta}). $$  In fact,
 for every $w,z \in \mathcal{S}^r_{X_1,\ldots,X_n}$
  $ d_{\alpha}(w,z)\leq (2r)^{n-1} d_{\beta}(w,z)$ and
 { $\|z\|_{{\alpha}}\leq r^{n-1}\|z\|_{{\beta}}. $}
   \end{theorem}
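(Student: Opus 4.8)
The plan is to estimate every reasonable cross norm against the two extreme ones, using that any reasonable cross norm $\gamma$ on $\Tensor{X}{1}{n}{}$ satisfies $\epsilon(u)\le\gamma(u)\le\pi(u)$ for all $u$. So I would first isolate a single \emph{key estimate}: if $z\in\Tensor{X}{1}{n}{}$ has rank $\le m$, then
\[
\pi(z)\le m^{n-1}\,\epsilon(z).
\]
Granting this, the norm inequality of the theorem is the case $m=r$ combined with $\|z\|_{\alpha}\le\pi(z)\le r^{n-1}\epsilon(z)\le r^{n-1}\|z\|_{\beta}$ for $z\in\mathcal{S}^r_{\spaces{X}{1}{n}}$. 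For the metric inequality, note that $w-z$ has rank $\le 2r$ whenever $w,z\in\mathcal{S}^r_{\spaces{X}{1}{n}}$, so applying the key estimate with $m=2r$ to $w-z$ gives
\[
d_{\alpha}(w,z)=\|w-z\|_{\alpha}\le\pi(w-z)\le(2r)^{n-1}\epsilon(w-z)\le(2r)^{n-1}\|w-z\|_{\beta}=(2r)^{n-1}d_{\beta}(w,z),
\]
and interchanging $\alpha$ and $\beta$ yields the claimed Lipschitz equivalence with constant $(2r)^{n-1}$.

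To prove the key estimate I would argue as follows. Fix a decomposition $z=\sum_{i=1}^{m} x_1^i\otimes\cdots\otimes x_n^i$ with at most $m$ summands, and set $F_j:=\operatorname{span}\{x_j^1,\ldots,x_j^m\}\subseteq X_j$, so that $d_j:=\dim F_j\le m$. Pick a (classical) Auerbach basis $\{e_j^1,\ldots,e_j^{d_j}\}$ of $F_j$ with biorthogonal functionals $\phi_j^1,\ldots,\phi_j^{d_j}\in F_j^*$ of norm one, and extend each $\phi_j^l$ by Hahn--Banach to a norm-one functional on $X_j$, keeping the same name. Now expand only the first $n-1$ tensor legs: substituting $x_j^i=\sum_{l=1}^{d_j}\phi_j^l(x_j^i)\,e_j^l$ for $j=1,\ldots,n-1$ and regrouping by multilinearity of the tensor product gives
\begin{gather*}
z=\sum_{l_1=1}^{d_1}\cdots\sum_{l_{n-1}=1}^{d_{n-1}} e_1^{l_1}\otimes\cdots\otimes e_{n-1}^{l_{n-1}}\otimes y_n^{l_1\ldots l_{n-1}},\\
y_n^{l_1\ldots l_{n-1}}:=\big(\phi_1^{l_1}\otimes\cdots\otimes\phi_{n-1}^{l_{n-1}}\otimes\mathrm{id}_{X_n}\big)(z)\in X_n.
\end{gather*}

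The heart of the matter is the bound $\|y_n^{l_1\ldots l_{n-1}}\|_{X_n}\le\epsilon(z)$. For every $x_n^*$ in the closed unit ball of $X_n^*$ one has $x_n^*\big(y_n^{l_1\ldots l_{n-1}}\big)=\big(\phi_1^{l_1}\otimes\cdots\otimes\phi_{n-1}^{l_{n-1}}\otimes x_n^*\big)(z)$, and the right-hand side has modulus at most $\|\phi_1^{l_1}\|\cdots\|\phi_{n-1}^{l_{n-1}}\|\,\|x_n^*\|\,\epsilon(z)\le\epsilon(z)$ straight from the definition of the injective norm; taking the supremum over such $x_n^*$ yields $\|y_n^{l_1\ldots l_{n-1}}\|_{X_n}\le\epsilon(z)$. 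Since also $\|e_j^l\|=1$, the displayed expression is an admissible representation of $z$ for the projective norm, so
\begin{align*}
\pi(z)&\le\sum_{l_1=1}^{d_1}\cdots\sum_{l_{n-1}=1}^{d_{n-1}}\|e_1^{l_1}\|\cdots\|e_{n-1}^{l_{n-1}}\|\,\|y_n^{l_1\ldots l_{n-1}}\|_{X_n}\\
&\le d_1\cdots d_{n-1}\,\epsilon(z)\le m^{n-1}\,\epsilon(z),
\end{align*}
which is the key estimate.

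The genuinely delicate point is not any single computation but the decision to expand exactly $n-1$ of the $n$ tensor legs: expanding all $n$ would only give the weaker bound $m^n\epsilon(z)$, whereas absorbing the last leg into the vectors $y_n^{l_1\ldots l_{n-1}}$ and controlling those through the injective norm is what produces the sharp exponent $n-1$; the rank hypothesis enters only through $d_j\le m$. A secondary care point is that the $y_n^{l_1\ldots l_{n-1}}$ must be estimated as elements of $X_n$ and not of the subspace $F_n$ — which is precisely why the Auerbach functionals are first extended by Hahn--Banach — and that the $\pi$ in the conclusion is the projective norm of the full space $\Tensor{X}{1}{n}{}$, for which the exhibited representation is a legitimate competitor.
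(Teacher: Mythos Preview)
Your proof is correct and follows essentially the same route as the paper's: both reduce to the key estimate $\pi(z)\le d_1\cdots d_{n-1}\,\epsilon(z)$ on finite-dimensional subspaces $F_j$ spanned by the components of a rank decomposition, obtain it by expanding only the first $n-1$ tensor legs along an Auerbach basis, and control the remaining leg via the injective norm. The paper packages this estimate separately as Proposition~\ref{prop: tensor subspaces} and phrases the Auerbach step through Ruston's result on projections of nuclear norm $d_j$, whereas you invoke Auerbach bases and Hahn--Banach directly; this is only a cosmetic difference.
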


\

\begin{remark}\label{rmk: not optimal bound} The Lipschitz constant  $(2r)^{n-1}$ given in Theorem \ref{thm: fixed rank  cone metrics}  is not optimal. For example, for   any cross norm $\alpha$ and any $r$,
  $d_{{\epsilon}}(w,z)\leq  d_{\alpha}(w,z)\leq d_{{\pi}}(w,z)$.  In the particular case of the injective and the projective tensor norms, we have that for   a $z\in \mathcal{S}^r_{X_1,\ldots,X_n}$,
$$ d_{\epsilon }(z,w) \leq d_{\pi }(z,w) \leq (2r)^{n-1}d_{\epsilon }(z,w) \;\;\;\;\mbox{and}\;\;\;\; \|z\|_{\epsilon }\leq \|z\|_{\pi}\leq r^{n-1} \|z\|_{\epsilon }.$$
\end{remark}

We will give the proof of  the theorem at the end of the section.  Before,   we will  prove several  related results.
\

\begin{corollary}\label{coro: completeness equiv all tensor norm}   The closure  of
$\mathcal{S}^r_{\spaces{X}{1}{n}}$ in $ \CTensor{X}{1}{n}{\alpha}$ coincides with its  closure   in $ \CTensor{X}{1}{n}{\pi}$.  In particular,  $(\mathcal{S}^r_{\spaces{X}{1}{n}}, d_{\alpha})$  is complete  if and only if $(\mathcal{S}^r_{\spaces{X}{1}{n}}, d_{\beta})$ is complete.

\end{corollary}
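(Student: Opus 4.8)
The whole statement is a consequence of Theorem \ref{thm: fixed rank  cone metrics}. The ``in particular'' clause is the easy part: completeness is a bi-Lipschitz invariant, so the Lipschitz equivalence $(\mathcal{S}^r_{\spaces{X}{1}{n}},d_{\alpha})\simeq(\mathcal{S}^r_{\spaces{X}{1}{n}},d_{\beta})$ provided by that theorem immediately gives that one of these metric spaces is complete if and only if the other is. Concretely, a sequence in $\mathcal{S}^r_{\spaces{X}{1}{n}}$ is $d_{\alpha}$-Cauchy if and only if it is $d_{\beta}$-Cauchy, and it $d_{\alpha}$-converges to a given point of $\mathcal{S}^r_{\spaces{X}{1}{n}}$ if and only if it $d_{\beta}$-converges to that same point.

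For the statement about closures I would make the comparison between $\CTensor{X}{1}{n}{\alpha}$ and $\CTensor{X}{1}{n}{\pi}$ explicit. Since $\alpha\le\pi$, the identity on the algebraic tensor product $\Tensor{X}{1}{n}{}$ extends to a contractive linear map $J\colon\CTensor{X}{1}{n}{\pi}\to\CTensor{X}{1}{n}{\alpha}$, and the claim is that $J$ restricts to a bijection from the closure of $\mathcal{S}^r_{\spaces{X}{1}{n}}$ in $\CTensor{X}{1}{n}{\pi}$ onto its closure in $\CTensor{X}{1}{n}{\alpha}$. That $J$ maps the former into the latter is clear from continuity of $J$ together with $J(\mathcal{S}^r_{\spaces{X}{1}{n}})=\mathcal{S}^r_{\spaces{X}{1}{n}}$. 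For the reverse inclusion, given $z$ in the $\alpha$-closure pick $(z_m)\subset\mathcal{S}^r_{\spaces{X}{1}{n}}$ with $\alpha(z_m-z)\to 0$; this sequence is $d_{\alpha}$-Cauchy, hence $d_{\pi}$-Cauchy by Theorem \ref{thm: fixed rank  cone metrics}, hence $\pi$-convergent (by completeness of $\CTensor{X}{1}{n}{\pi}$) to some $\tilde z$ in the $\pi$-closure of $\mathcal{S}^r_{\spaces{X}{1}{n}}$, and $J(\tilde z)=z$ because $J$ is continuous and fixes each $z_m$.

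It remains to check that $J$ is injective on the $\pi$-closure of $\mathcal{S}^r_{\spaces{X}{1}{n}}$, and this is the only delicate point, since $J$ itself need not be injective on all of $\CTensor{X}{1}{n}{\pi}$. If $\tilde z,\tilde z'$ lie in that closure with $J(\tilde z)=J(\tilde z')$, write them as $\pi$-limits of sequences $(z_m),(z'_m)\subset\mathcal{S}^r_{\spaces{X}{1}{n}}$; then $\alpha(z_m-z'_m)\to 0$, and since both $z_m$ and $z'_m$ belong to $\mathcal{S}^r_{\spaces{X}{1}{n}}$ (even though their difference may have rank up to $2r$), Theorem \ref{thm: fixed rank  cone metrics} applies to the pair $(z_m,z'_m)$ and yields $\pi(z_m-z'_m)\le (2r)^{n-1}\alpha(z_m-z'_m)\to 0$, so $\tilde z=\tilde z'$. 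Thus $J$ induces a bijection between the two closures, which by the quantitative estimates of Theorem \ref{thm: fixed rank  cone metrics} is bi-Lipschitz; this is the precise sense in which the two closures coincide (and it gives the ``in particular'' clause again, via $\overline{\mathcal{S}^r_{\spaces{X}{1}{n}}}^{\,\alpha}=\mathcal{S}^r_{\spaces{X}{1}{n}}\iff\overline{\mathcal{S}^r_{\spaces{X}{1}{n}}}^{\,\pi}=\mathcal{S}^r_{\spaces{X}{1}{n}}$). The main obstacle is exactly this injectivity step, where one must use the explicit Lipschitz bound of Theorem \ref{thm: fixed rank  cone metrics} rather than mere topological equivalence; everything else is soft, namely continuity, density, and completeness of the projective completion.
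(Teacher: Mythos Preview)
Your proof is correct and follows essentially the same approach as the paper: both arguments use the Lipschitz equivalence of Theorem~\ref{thm: fixed rank  cone metrics} to build mutually inverse maps between the two closures via extension/density. The only cosmetic difference is that the paper works with the abstract metric completions $\overline{(\mathcal{S}^r,d_\pi)}$ and $\overline{(\mathcal{S}^r,d_\alpha)}$ and extends the identity and its inverse to Lipschitz maps $\mathcal{I},\mathcal{J}$ satisfying $\mathcal{J}\circ\mathcal{I}=Id$ and $\mathcal{I}\circ\mathcal{J}=Id$, whereas you work inside the ambient Banach-space completions via the global contraction $J$ and check surjectivity and injectivity on the closure separately---your explicit remark that $J$ need not be injective on all of $\CTensor{X}{1}{n}{\pi}$ is a nice point that the paper's abstract packaging sidesteps.
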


\begin{proof}[Proof of Corollary]
Here, $\alpha, \beta$ are as in Theorem \ref{thm: fixed rank  cone metrics}.  Let  $i: \CTensor{X}{1}{n}{\pi}\longrightarrow \CTensor{X}{1}{n}{\alpha}$   be the inclusion mapping.  Observe that $i_{|_{\mathcal{S}^r}}:(\mathcal{S}^r,d_{\pi})\longrightarrow (\mathcal{S}^r,d_{\alpha})$. We denote by  $\nu_{\pi}$ and $\nu_{\alpha}$   the    inclusion mappings into  the  respective completed spaces. Let us check that the mappings in the following diagrams are well defined:

      \[
\begin{tikzcd}[row sep=2em,column sep=4em]
    (\mathcal{S}^r,d_{\pi}) \arrow{r}{i_{|_{\mathcal{S}^r}}}\arrow{d}{\nu_{\pi}}  & (\mathcal{S}^r,d_{\alpha}) \arrow{d}{\nu_{\alpha}} &  (\mathcal{S}^r,d_{\alpha}) \arrow{r}{(i_{|_{\mathcal{S}^r}})^{-1}}\arrow{d}{\nu_{\alpha}}  & (\mathcal{S}^r,d_{\pi}) \arrow{d}{\nu_{\pi}}  \\
    \overline{(\mathcal{S}^r,d_{\pi})} \arrow[dotted]{r}{\mathcal{I}}  &  \overline{(\mathcal{S}^r,d_{\alpha})} & \overline{(\mathcal{S}^r,d_{\alpha})} \arrow[dotted]{r}{\mathcal{J}} &  \overline{(\mathcal{S}^r,d_{\pi})} \\
\end{tikzcd}
\]
Each mapping  in the first  commutative diagram  has Lipschitz norm $\leq 1$. The dotted arrow ${\mathcal{I}}$  is  the extension of $ \nu_{\alpha} \circ (i_{|_{\mathcal{S}^r}})$  to the completed space   $\overline{(\mathcal{S}^r,d_{\pi})}$.
 By Theorem \ref{thm: fixed rank  cone metrics}  we have that $(i_{|_{\mathcal{S}^r}})^{-1}$, and therefore  $\nu_{\pi} \circ (i_{|_{\mathcal{S}^r}})^{-1}$,  has Lipschitz norm $\leq (2r)^{n-1}$. Consequently, the same constant bounds the Lipschitz norm of
 its  extension  to the closure, which is
  the dotted arrow ${\mathcal{J}}$.

  The mappings  $Id_{\pi}: {{\overline{(\mathcal{S}^r,d_{\pi})}}}\rightarrow \overline{(\mathcal{S}^r,d_{\pi})}$ and $\mathcal{J}\circ \mathcal{I} $ are equal, since they coincide  in the   dense subset $(\mathcal{S}^r,d_{\pi})$. An   analogous argument shows that   $Id_{\alpha}=\mathcal{I}\circ \mathcal{J}$, and the result follows.

Of course, if  $\beta$  is any other   reasonable cross norms, the closures  must also coincide:
 $\overline{(\mathcal{S}^r, \delta_{\alpha})}= \overline{(\mathcal{S}^r , \delta_{\pi})}=\overline{(\mathcal{S}^r, \delta_{\beta})}$.
  The assertion on the completeness is immediate from this.
\end{proof}

 \begin{proposition}\label{prop: tensor subspaces} Let $\spaces{X}{1}{n}$ be Banach spaces and, for each $j=1, \ldots , n$, let $F_j\subset X_j$ be a subspace of finite dimension $d_j$.  Then, for every $z\in \Tensor{F}{1}{n}{},$
 $$\|z\|_{\CTensor{F}{1}{n}{\pi}} \leq d_1\cdots d_{n-1}\|z\|_{\CTensor{F}{1}{n}{\epsilon}}.  $$ Consequently,  $\|z\|_{\CTensor{X}{1}{n}{\pi}} \leq d_1\cdots d_{n-1}\|z\|_{\CTensor{X}{1}{n}{\epsilon}}.$
 \end{proposition}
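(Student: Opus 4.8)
The plan is to prove the bound by induction on the number of factors $n$, reducing the $n$-fold tensor product to a single "separation" of one variable at a time. The base case $n=1$ is trivial since $\|z\|_\pi=\|z\|_\epsilon=\|z\|$ on a single space. For the inductive step, I would use the associativity of tensor products and regard $\Tensor{F}{1}{n}{}$ as $F_1 \otimes (\Tensor{F}{2}{n}{})$, so that it suffices to prove the two-factor statement $\|w\|_{F\otimes_\pi G}\leq (\dim F)\,\|w\|_{F\otimes_\epsilon G}$ for finite-dimensional $F$ and arbitrary Banach $G$, and then iterate: peeling off $F_1$ costs a factor $d_1$, peeling off $F_2$ costs $d_2$, and so on down to the last pair, which costs only $d_{n-1}$ (the final factor $F_n$ needs no separation), giving the product $d_1\cdots d_{n-1}$.

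For the two-factor estimate I would invoke an Auerbach basis of $F$: since $\dim F = d<\infty$, there exist $e_1,\dots,e_d\in F$ and $e_1^*,\dots,e_d^*\in F^*$ with $\|e_i\|=\|e_i^*\|=1$ and $e_i^*(e_j)=\delta_{ij}$. Any $w\in F\otimes G$ can be written uniquely as $w=\sum_{i=1}^d e_i\otimes g_i$ with $g_i=(e_i^*\otimes \mathrm{id})(w)\in G$. Because $\epsilon$ is a reasonable cross norm, the map $e_i^*\otimes \mathrm{id}$ has norm $\le \|e_i^*\|=1$ from $F\otimes_\epsilon G$ to $G$, so $\|g_i\|_G\le \|w\|_{F\otimes_\epsilon G}$ for each $i$. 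On the other hand, by the triangle inequality and the cross-norm property of $\pi$,
\[
\|w\|_{F\otimes_\pi G}\le \sum_{i=1}^d \|e_i\otimes g_i\|_{F\otimes_\pi G}=\sum_{i=1}^d \|e_i\|\,\|g_i\|_G\le d\,\|w\|_{F\otimes_\epsilon G},
\]
which is exactly the desired inequality with constant $d=\dim F$.

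Carrying out the induction: write $z\in \Tensor{F}{1}{n}{}$ and view it inside $F_1\otimes_\pi(\Tensor{F}{2}{n}{\pi})$. Applying the two-factor estimate with $F=F_1$, $G=\Tensor{F}{2}{n}{\pi}$ gives $\|z\|_{\pi}\le d_1\,\|z\|_{F_1\otimes_\epsilon(\Tensor{F}{2}{n}{\pi})}$. Now I would like to replace the $\pi$-norm on the inner block by the $\epsilon$-norm; here one uses that the injective norm is "injective/associative" in the sense that $F_1\otimes_\epsilon(\Tensor{F}{2}{n}{\pi})$ sits between $F_1\otimes_\epsilon(\Tensor{F}{2}{n}{\epsilon})$ and $F_1\otimes_\epsilon(\Tensor{F}{2}{n}{\pi})$ with norm monotone in the inner norm — more precisely, since $\epsilon$ is a reasonable cross norm on the two factors $F_1$ and $\Tensor{F}{2}{n}{}$, the identity is contractive from the coarser to the finer, so one can re-expand using an Auerbach basis of $F_1$ and the fact that the coefficient functionals $e_i^*\otimes\mathrm{id}$ remain contractive, eventually reducing to $d_2\cdots d_{n-1}\|z\|_{\CTensor{F}{2}{n}{\epsilon}}$ by the inductive hypothesis applied to the $(n-1)$ factors $F_2,\dots,F_n$. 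Finally, the "Consequently" clause follows because for finite-dimensional $F_j\subset X_j$ the inclusion $\Tensor{F}{1}{n}{}\hookrightarrow \Tensor{X}{1}{n}{}$ is isometric for both $\pi$ (by the metric mapping property / quotient description) and $\epsilon$ (by Hahn–Banach extension of functionals), so the inequality transfers verbatim to $z\in\Tensor{F}{1}{n}{}\subset\Tensor{X}{1}{n}{}$.

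The main obstacle I anticipate is the bookkeeping in the inductive step: one must be careful that the norm appearing on the "inner" block after the first separation is handled consistently — i.e., that replacing $\Tensor{F}{2}{n}{\pi}$ by $\Tensor{F}{2}{n}{\epsilon}$ in the ambient two-factor tensor product does not cost an extra constant, which relies on the associativity/injectivity properties of $\epsilon$ and on the Auerbach coefficient functionals being simultaneously contractive on both the inner and outer levels. Everything else (the Auerbach basis, the cross-norm inequalities, the isometric embedding of finite-dimensional subspaces) is standard; this compatibility of the injective norm under nesting is the one place where some care is needed, though it is classical (the injective tensor norm is associative).
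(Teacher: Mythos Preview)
Your approach is essentially the paper's: both hinge on an Auerbach basis of each of the first $n-1$ factors (the paper phrases this as a projection $T_j$ onto $F_j$ of nuclear norm $d_j$, citing Ruston). The paper expands all $n-1$ Auerbach representations in a single direct computation and closes with the identity $\|z\|_\epsilon=\|L_z\|_{\mathcal{L}(X_1^*\times\cdots\times X_{n-1}^*;\,X_n)}$, whereas you organize the same bookkeeping inductively via the associativity of $\pi$ and of $\epsilon$; the constant $d_1\cdots d_{n-1}$ comes out the same either way.

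One correctable slip: in the ``Consequently'' clause you assert that the inclusion $\Tensor{F}{1}{n}{}\hookrightarrow\Tensor{X}{1}{n}{}$ is \emph{isometric} for $\pi$. That is false in general---the projective norm is not injective on subspaces. What does hold, and what the paper uses, is only the contraction $\|z\|_{\CTensor{X}{1}{n}{\pi}}\le\|z\|_{\CTensor{F}{1}{n}{\pi}}$; combined with the genuine $\epsilon$-isometry, this is exactly the inequality needed. Your inductive-step paragraph is also garbled as written (``$A$ sits between $B$ and $A$'' gives no upper bound on $A$), but the route you gesture at works cleanly: with $g_i=(e_i^*\otimes\mathrm{id})(z)$ one has
\[
\|z\|_\pi\le\sum_{i=1}^{d_1}\|g_i\|_\pi\le d_2\cdots d_{n-1}\sum_{i=1}^{d_1}\|g_i\|_\epsilon\le d_1d_2\cdots d_{n-1}\,\|z\|_\epsilon,
\]
the last inequality because each $e_i^*\otimes\mathrm{id}$ is contractive from the $n$-fold to the $(n{-}1)$-fold injective tensor product by associativity of $\epsilon$.
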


 \begin{proof}
  For each $j=1,\ldots,n-1$, let  $T_j$  be a projection mapping  onto $F_j$. There exist  representations of the form  $T_j=\sum_{i=1}^{d_j}  f_{i}^j\otimes w_{i}^j$ with nuclear norm  $\nu(T_j)=d_j = \sum_{j=i}^{d_j} \cdot \|f_{i}^j\|\cdot  \|w_{i}^j\|$ where $w_{i}^j\in F_j$ and $\|f_j\|_{X_j^*}=\|{f_{i}^j}_{|_{F_j}}\|=1$ (see \cite[Theorem 2]{Ruston}).
  Let $z= \sum_{i=1}^{m}{z}_i^1 \otimes\cdots \otimes {z}_{i}^n\in \Tensor{F}{1}{n}{}$.  Then, $z_i^j=T_j(z_i^j)$ and
 \begin{align*}
 \|z\|&_{\CTensor{F}{1}{n}{\pi}}   = \|\sum_{i=1}^{m}T_1({z}_i^1) \otimes\cdots \otimes T_{n-1}({z}_i^{n-1})\otimes {z}_{i}^n\|_{{\CTensor{F}{1}{n}{\pi}}}   \\
   & = \|\sum_{i=1}^{m}(\sum_{k_1=1}^{d_1}f_{k_1}^1({z}_i^1)w_{k_1}^1) \otimes\cdots \otimes   (\sum_{k_{n-1}=1}^{d_{n-1}}f_{k_{n-1}}^1({z}_i^{n-1})w_{k_{n-1}}^{n-1})\otimes {z}_{i}^n\|_{\CTensor{F}{1}{n}{\pi}}  \\
   & {\leq} \sum_{k_1,\ldots, k_{n-1} } \|w_{k_1}^1\| \cdots \|w_{k_{n-1}}^{n-1}\| \cdot\| \sum_{i=1}^{m}f_{k_1}^1({z}_i^1)\cdots f_{k_{n-1}}^{n-1}({z}_i^{n-1})  {z}_{i}^n\| \\
   & \stackrel{*}{\leq}  \sum_{k_1,\ldots, k_{n-1} } \|w_{k_1}^1\| \cdots \|w_{k_{n-1}}^{n-1}\| \|f_{k_1}^1\| \cdots \|f_{k_{n-1}}^{n-1}\| \cdot\| \sum_{i=1}^{m}{z}_i^1\otimes\cdots \otimes   {z}_{i}^n\|_{\CTensor{X}{1}{n}{\epsilon}} \\   & {\leq} d_1\cdots d_{n-1}\|\sum_{i=1}^{m}{z}_i^1 \otimes\cdots \otimes {z}_{i}^n \|_{{\CTensor{X}{1}{n}{\epsilon}}} \leq d_1\cdots d_{n-1}\|\sum_{i=1}^{m}{z}_i^1 \otimes\cdots \otimes {z}_{i}^n \|_{{\CTensor{F}{1}{n}{\epsilon}}}
  \end{align*}
 Inequality (*) holds because the injective tensor norm satisfies that for $z=\sum_{i=1}^l z_i^1\otimes\cdots\otimes z_i^n$ and  $L_z\in \mathcal{L}(\cartesian{X^*}{1}{n-1}; X_n) $ defined as  $L_z(\spaces{\phi}{1}{n-1}):= \sum_{i=1}^l\phi_1(z_i^1)\otimes\cdots\otimes \phi_{n-1}(z_i^{n-1})z_i^n$,  it holds that $\|z\|_{\epsilon}=\|{L}_z\|$ .

   This, along with the injectivity of the $\epsilon$-norm, proves the first statement. The second one follows from this and  the fact that    $ \|z\|_{\CTensor{X}{1}{n}{\pi}} \leq\|z\|_{\CTensor{F}{1}{n}{\pi}}$ and   $\|z\|_{\CTensor{X}{1}{n}{\epsilon}} =\|z\|_{\CTensor{F}{1}{n}{\epsilon}}$ always hold.
 \end{proof}

   The Banach-Mazur distance  between isomorphic Banach spaces    (see \cite{TomczJae-libro})  is  $$d_{BM}(X,Y)=\inf\{\|T\|\,\|T^{-1}\|;\; T\in \mathcal{L}(X,Y) \;  \mbox {is an onto isomorphism }\} $$

  \begin{corollary}\label{coro: BMazur} Let $F_1,\ldots, F_n$, be normed spaces of finite dimensions $d_1,\ldots, d_n$. Then:
 \begin{gather}\label{gath: Banach-Mazur} d_{BM}(\Tensor{F}{1}{n}{\pi}, \Tensor{F}{1}{n}{\epsilon})\leq \Pi_{j=1}^{n-1}d_j.
 \end{gather}
  \end{corollary}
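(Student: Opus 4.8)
The plan is to realize the bound with a single explicit onto isomorphism and then read off the result from the definition of $d_{BM}$. Since $F_1,\ldots,F_n$ are finite dimensional, the algebraic tensor product $\Tensor{F}{1}{n}{}$ is already complete for every reasonable cross norm, so $\CTensor{F}{1}{n}{\alpha}=\Tensor{F}{1}{n}{\alpha}$ for $\alpha=\pi,\epsilon$; in particular $\Tensor{F}{1}{n}{\pi}$ and $\Tensor{F}{1}{n}{\epsilon}$ are one and the same vector space carrying two equivalent norms, and the natural candidate isomorphism is the identity $Id\colon\Tensor{F}{1}{n}{\pi}\to\Tensor{F}{1}{n}{\epsilon}$.

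First I would record that $\|Id\|\le 1$: this is exactly the inequality $\epsilon(z)\le\pi(z)$ that holds on any tensor product and was recalled at the start of the section. Next, the reverse estimate $\|Id^{-1}\|\le d_1\cdots d_{n-1}$ is precisely Proposition \ref{prop: tensor subspaces} applied with $X_j=F_j$, so that $F_j\subset X_j$ is all of $X_j$: for every $z\in\Tensor{F}{1}{n}{}$ one has $\|z\|_{\CTensor{F}{1}{n}{\pi}}\le d_1\cdots d_{n-1}\,\|z\|_{\CTensor{F}{1}{n}{\epsilon}}$, which under the finite-dimensional identification above says exactly $\|Id^{-1}(z)\|_{\pi}\le d_1\cdots d_{n-1}\,\|z\|_{\epsilon}$. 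Multiplying the two operator-norm bounds gives $\|Id\|\,\|Id^{-1}\|\le d_1\cdots d_{n-1}=\prod_{j=1}^{n-1}d_j$.

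Finally, since $d_{BM}(\Tensor{F}{1}{n}{\pi},\Tensor{F}{1}{n}{\epsilon})$ is defined as the infimum of $\|T\|\,\|T^{-1}\|$ over \emph{all} onto isomorphisms $T$, the single choice $T=Id$ already yields the claimed inequality (\ref{gath: Banach-Mazur}). I do not expect any genuine obstacle: the corollary is a direct restatement of Proposition \ref{prop: tensor subspaces} together with the trivial bound $\epsilon\le\pi$, and the only point that deserves an explicit word is that finite-dimensionality collapses the distinction between the algebraic tensor product and its completion, so that $Id$ is indeed an onto isomorphism between the two normed spaces in the statement.
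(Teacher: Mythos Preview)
Your proposal is correct and matches the paper's approach exactly: the paper does not write out a formal proof of the corollary, but the remark immediately following it (``the bound in the corollary is obtained with the $Id$ isomorphism'') confirms that the intended argument is precisely yours---take $T=Id$, use $\epsilon\le\pi$ for $\|Id\|\le 1$, and invoke Proposition~\ref{prop: tensor subspaces} for $\|Id^{-1}\|\le d_1\cdots d_{n-1}$.
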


  It is important to note that the bound in the corollary is obtained with the Id isomorphism.
  Also observe  that  the best bound is achieved when   the spaces are  reordered in such a way that    $d_j\leq d_n$, $j=1,\ldots n-1$.  Thus, for example, if $N\to \infty$, the following  distances remain bounded:
  $$ d_{BM}(\Tensor{{\ell_d^2}}{}{}{\pi}\otimes_{\pi}\ell_N^2, \Tensor{{\ell_d^2}}{}{}{\epsilon}\otimes_{\epsilon}\ell_N^2) \leq d^{n-1}.$$

 \

 \

 \begin{proof}{(of Theorem \ref{thm: fixed rank  cone metrics})}
  Consider $w,z \in \mathcal{S}^r_{X_1,\ldots,X_n}$. Then, there exist expressions $w=\sum_{i=1}^r  {w}_{i}^1 \otimes\cdots\otimes {w}_{i}^n$  and  $z=\sum_{i=1}^r  {z}_{i}^1 \otimes\cdots\otimes {z}_{i}^n$
with ${w}_{i}^j, {z}_{i}^j \in X_j$. For each $j=1,\ldots,n$,   consider  the  finite dimensional subspace $F_j=span\{w_1^j,\ldots,w_r^j, z_1^j,\ldots,z_r^j\}\subset X_j;  $ $\dim(F_j):=d_j\leq 2r$. Using  Proposition \ref{prop: tensor subspaces}, we  get
\begin{align*}
 d_{\alpha}(w,z)=& \|w-z\|_{\CTensor{X}{1}{n}{\alpha}}\leq   \|w-z\|_{\CTensor{X}{1}{n}{\pi}}\leq \\
    & (2r)^{n-1}\|w-z\|_{\CTensor{X}{1}{n}{\epsilon}}\leq(2r)^{n-1}\|w-z\|_{\CTensor{X}{1}{n}{\beta}}=(2r)^{n-1} d_{\beta}(w,z).
\end{align*}
If we interchange the  roles of $\alpha$ and $\beta$ in the previous estimation, we obtain  the assertion that
the metrics $d_{\alpha}$ and $d_{\beta}$ are Lipschitz  equivalent on $\mathcal{S}^r_{X_1,\ldots,X_n}$.

Note that, when  $w=0$, the subspace $F_j=\{ z_1^j,\ldots,z_r^j\}$ has dimension $d_j\leq r$. In this case, Propositon \ref{prop: tensor subspaces} establishes the inequality  $\|z\|_{{\alpha}}\leq r^{n-1}\|z\|_{{\beta}}. $

\end{proof}

\subsection*{A multilinear  Auerbach's Theorem}

\

  The  existence of {\sl Auerbach bases} is behind the proof of Proposition \ref{prop: tensor subspaces}.
Recall that
a basis $ \{\spaces{x}{1}{d}\}$ in a  finite dimensional Banach space $F$, is called an {\sl Auerbach basis  of $F$} if it is normalized and there exists a normalized basis of the dual space,  $ \{\spaces{x^*}{1}{d}\}\subset F^*$, such that  $ x^*_i(x_j)=\delta_j^i$ and $\|x_i\|=\|x_i^* \|=1$ (see  \cite[Proposition 1.c.3]{LTz}).
  This geometric tool can be constructed   in tensor product spaces    as follows:

 \begin{lemma}[{Multilinear Auerbach Lemma}]\label{lemma: Auerbach basis}
  Let $\spaces{F}{1}{n}$ be finite dimensional Banach spaces.  If  for each $j=\{1,\ldots, n\}$, $ \{\spaces{x^j}{1}{d_j}\}$ is  an Auerbach's basis of $F_j$, then,  for any  reasonable cross norm   $\alpha$ be a $\Tensor{F}{1}{n}{}$,
   the set $\beta:=\{x_{k_1}^1\otimes\cdots\otimes x_{k_n}^n\}_{\substack{k_1=1,\ldots,d_1 \\  \hspace{.4cm} \ldots \\  k_n=1,\ldots,d_n }}$ is an Auerbach basis of $\Tensor{F}{1}{n}{\alpha}$, whose dual basis is $\{{x^*}_{k_1}^1\otimes\cdots\otimes {x^*}_{k_n}^n\}_{\substack{k_1=1,\ldots,d_1 \\ \hspace{.4cm}\ldots \\  k_n=1,\ldots,d_n }}$.
  \end{lemma}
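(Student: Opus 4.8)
The plan is to reduce everything to the one-dimensional Auerbach case by tensoring, and to handle the two defining properties of an Auerbach basis (normalization and normalization of the dual basis) separately.

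First I would check that $\beta$ is a basis of $\bigotimes_{j=1}^n F_j$: this is the standard fact that tensor products of bases are bases, and the cardinality $d_1\cdots d_n$ matches $\dim\bigotimes_{j=1}^n F_j$. Next, since $\alpha$ is a reasonable cross norm, property (1) gives $\alpha(x_{k_1}^1\otimes\cdots\otimes x_{k_n}^n)\le \|x_{k_1}^1\|\cdots\|x_{k_n}^n\| = 1$, because each $x_{k_j}^j$ is normalized. Dually, property (2) of a reasonable cross norm (which, as recalled in the excerpt, forces the corresponding inequalities to be equalities) gives $\|{x^*}_{k_1}^1\otimes\cdots\otimes {x^*}_{k_n}^n\|_{(\bigotimes F_j)^*}\le \|{x^*}_{k_1}^1\|\cdots\|{x^*}_{k_n}^n\| = 1$, using that the dual bases are normalized. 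The pairing relation $({x^*}_{k_1}^1\otimes\cdots\otimes {x^*}_{k_n}^n)(x_{l_1}^1\otimes\cdots\otimes x_{l_n}^n)=\prod_{j=1}^n {x^*}_{k_j}^j(x_{l_j}^j)=\prod_j \delta_{k_j}^{l_j}$ is immediate from the definition of the tensor of functionals, and this is exactly the biorthogonality $\delta$-relation required, with indices running over the multi-index set.

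The only point that needs a genuine argument is that the inequalities above are in fact equalities, i.e. $\alpha(x_{k_1}^1\otimes\cdots\otimes x_{k_n}^n)=1$ and $\|{x^*}_{k_1}^1\otimes\cdots\otimes {x^*}_{k_n}^n\|=1$, so that $\beta$ and its dual set are genuinely \emph{normalized}. For the lower bound on $\alpha$ of the decomposable tensor, I would evaluate it against the norm-one functional ${x^*}_{k_1}^1\otimes\cdots\otimes {x^*}_{k_n}^n$: this gives $1=\prod_j{x^*}_{k_j}^j(x_{k_j}^j)\le \|{x^*}_{k_1}^1\otimes\cdots\otimes {x^*}_{k_n}^n\|\cdot\alpha(x_{k_1}^1\otimes\cdots\otimes x_{k_n}^n)\le 1\cdot 1$, forcing both factors to equal $1$ simultaneously. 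This single estimate pins down both normalizations at once. (Alternatively one may invoke $\epsilon\le\alpha$ and compute the injective norm directly via the functionals, but the duality pairing argument is cleaner and self-contained.)

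Finally I would assemble the pieces: $\beta$ is a basis, it is normalized, its biorthogonal dual set is $\{{x^*}_{k_1}^1\otimes\cdots\otimes{x^*}_{k_n}^n\}$, and that dual set is normalized; by definition this makes $\beta$ an Auerbach basis of $\bigotimes_{j=1}^n F_j$ in the norm $\alpha$, with the stated dual basis. The main (and only) obstacle is the equality case — establishing that the cross-norm inequalities are tight on these particular decomposable tensors — and the duality pairing trick above resolves it in one line, so I expect the proof to be short.
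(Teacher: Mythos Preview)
Your proposal is correct and follows essentially the same approach as the paper: verify that $\beta$ is a basis, check biorthogonality via the multiplicative pairing relation, and establish that both $\beta$ and $\beta^*$ consist of norm-one vectors. The only difference is cosmetic: the paper directly invokes the fact (stated earlier) that for any reasonable cross norm the inequalities in (1) and (2) are automatically equalities, while you re-derive this equality in the particular case at hand via the duality pairing $1\le \|{x^*}_{k_1}^1\otimes\cdots\otimes {x^*}_{k_n}^n\|\cdot\alpha(x_{k_1}^1\otimes\cdots\otimes x_{k_n}^n)\le 1$.
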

  \begin{proof} It is clear that $\beta$ is a basis of the vector space $\Tensor{F}{1}{n}{}$.  Let   $ \{\spaces{{x^*}^j}{1}{d_j}\}$ denote  the biorthogonal Auerbach basis in   each $F^*_j$.
   The system  $\beta^*:=\{{x^*}_{k_1}^1\otimes\cdots\otimes {x^*}_{k_n}^n\}_{\substack{k_1=1,\ldots,d_1 \\ \ldots \\  k_n=1,\ldots,d_n }}$ is   biorthogonal to  $\beta$. This follows from the relation
     $ ({x^*}_{k_1}^1\otimes\cdots\otimes {x^*}_{k_n}^n)(x_{l_1}^1\otimes\cdots\otimes x_{l_n}^n):={x^*}_{k_1}^1({x}_{l_1}^1)\cdots {x^*}_{k_n}({x}_{l_n}^n).$  It only remains to check   that every element in  this system  has norm $1$: since    $\alpha$ is a cross norm,   we have that $\alpha(x_{k_1}^1\otimes\cdots\otimes x_{k_n}^n)=\|x_{k_1}^1\|\cdots  \| {x}_{k_n}^n\|=1$ and $\|{x^*}_{k_1}^1\otimes\cdots\otimes {x^*}_{k_n}^n\|=\|{x^*}_{k_1}^1\|\cdots \| {x^*}_{k_n}^n\|=1$.
 \end{proof}

At the price of a worse constant, $d_1\cdots d_n$, it is possible to prove  Proposition \ref{prop: tensor subspaces} using the  Multilinear Auerbach lemma,  in a more  direct and geometric way.

\

\subsection*{Asymptotic comparison between  $d_{\alpha}$ and $  d_{\beta}$ in   $S^r_{X_1,\ldots, X_n}$}

Although all  cross norms are  equivalent to each other on   vectors of rank $\leq k$, in general  they are not  uniformly equivalent,  varying $k$.   The following constants capture    the asymptotic discrepancy between two reasonable cross norms:
\begin{align}\label{eq: def sigma_k}
    \sigma_{\spaces{X}{1}{n}}^r(\alpha, \beta):=  inf \{C>0; \;\;\;d_{\alpha}(w,z)\leq C \; d_{\beta}(w,z) \;\;\; \mbox{for any}\;\;\;  w, z \in \mathcal{S}_{{\spaces{X}{1}{n}}}^r\}. \nonumber
   \end{align}

 \begin{proposition}\label{prop: gap constants} Let $\spaces{X}{1}{n}$, ${\alpha}$ and ${\beta}$ as in Theorem \ref{thm: fixed rank  cone metrics}.
  Then, the  multilinear mapping $\otimes: \cartesian{X}{1}{n}\rightarrow \CTensor{X}{1}{n}{\alpha}$ determines a  continuous bounded linear operator
   $\Psi: \CTensor{X}{1}{n}{\beta}\rightarrow \CTensor{X}{1}{n}{\alpha}$  if, and only if,  the sequence
  $ \{   \sigma_{\spaces{X}{1}{n}}^r(\alpha, \beta)\}_r  $
  is  bounded.
  \end{proposition}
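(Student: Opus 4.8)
The plan is to prove the two implications separately, exploiting that the decomposable tensors $\Sigma_{X_1,\ldots,X_n}$ span a dense subspace of $\CTensor{X}{1}{n}{\beta}$ and that every element of this dense subspace lies in some $\mathcal{S}^r$.

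\medskip

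\textbf{Sufficiency.} Suppose $\{\sigma^r(\alpha,\beta)\}_r$ is bounded, say by $C$. First I would observe that the algebraic tensor product $\Tensor{X}{1}{n}{}$ is dense in $\CTensor{X}{1}{n}{\beta}$, and that each $z$ in it has finite rank, hence belongs to $\mathcal{S}^r$ for $r=r_z$. Therefore, for every such $z$, applying the definition of $\sigma^r(\alpha,\beta)$ with $w=0$ gives $\|z\|_\alpha \le \sigma^r(\alpha,\beta)\,\|z\|_\beta \le C\,\|z\|_\beta$; here I use that $0\in\mathcal{S}^r$ and that $d_\alpha(z,0)=\|z\|_\alpha$, $d_\beta(z,0)=\|z\|_\beta$. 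Thus the identity map $\Tensor{X}{1}{n}{}\to\Tensor{X}{1}{n}{}$ is $\beta$-to-$\alpha$ bounded with constant $C$ on a dense subspace, and so it extends uniquely to a bounded linear operator $\Psi:\CTensor{X}{1}{n}{\beta}\to\CTensor{X}{1}{n}{\alpha}$ with $\|\Psi\|\le C$. By construction $\Psi$ agrees with $\otimes$ on $\cartesian{X}{1}{n}$ (since it agrees with the identity on decomposable tensors), which is exactly the factorization asserted.

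\medskip

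\textbf{Necessity.} Conversely, suppose such a continuous $\Psi$ exists, so $\|\Psi(z)\|_\alpha\le\|\Psi\|\,\|z\|_\beta$ for all $z$. Because $\Psi$ is the continuous extension of $\otimes$, it restricts to the identity on the algebraic tensor product: for a decomposable tensor $x_1\otimes\cdots\otimes x_n$ we have $\Psi(x_1\otimes\cdots\otimes x_n)=\otimes(x_1,\ldots,x_n)=x_1\otimes\cdots\otimes x_n$, and by linearity the same holds on all of $\Tensor{X}{1}{n}{}$. Now fix $r$ and take $w,z\in\mathcal{S}^r_{\spaces{X}{1}{n}}$; then $w-z\in\Tensor{X}{1}{n}{}$, so $d_\alpha(w,z)=\|w-z\|_\alpha=\|\Psi(w-z)\|_\alpha\le\|\Psi\|\,\|w-z\|_\beta=\|\Psi\|\,d_\beta(w,z)$. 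Hence $\sigma^r(\alpha,\beta)\le\|\Psi\|$ for every $r$, i.e.\ the sequence is bounded (indeed uniformly, by $\|\Psi\|$).

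\medskip

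The only genuinely delicate point is the identification of $\Psi$ with the identity on the algebraic tensor product, i.e.\ checking that the unique continuous extension of the multilinear map $\otimes$ really does send $z\mapsto z$ for $z\in\Tensor{X}{1}{n}{}$. This is a soft density argument — $\Psi$ and the inclusion $\Tensor{X}{1}{n}{}\hookrightarrow\CTensor{X}{1}{n}{\alpha}$ are two continuous maps agreeing on decomposable tensors, and by (bi)linearity they agree on their span — but it is the hinge on which both directions turn, since it is what lets one pass freely between the operator norm of $\Psi$ and the Lipschitz constants $\sigma^r(\alpha,\beta)$. Everything else is a routine application of Theorem \ref{thm: fixed rank cone metrics} (which guarantees the relevant differences lie in a space where all reasonable cross norms are comparable) together with the standard fact that a bounded linear map on a dense subspace extends uniquely to the completion with the same norm.
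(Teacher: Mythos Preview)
Your proof is correct and follows essentially the same route as the paper's: in both directions the key observation is that $\Psi$ restricts to the identity on the algebraic tensor product, after which necessity is the one-line estimate $\|w-z\|_\alpha=\|\Psi(w-z)\|_\alpha\le\|\Psi\|\,\|w-z\|_\beta$, and sufficiency is the bounded-extension-from-a-dense-subspace argument. Your closing remark that Theorem~\ref{thm: fixed rank  cone metrics} is needed is a slight overstatement---neither direction actually invokes it; only the definition of $\sigma^r(\alpha,\beta)$ and density are used.
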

  \begin{proof} Observe that $\Psi_{|{\Tensor{X}{1}{n}{}}}$ is the inclusion mapping $\Tensor{X}{1}{n}{}\rightarrow \CTensor{X}{1}{n}{\alpha}$.  Assume first that $\Psi:\CTensor{X}{1}{n}{\beta}\longrightarrow \CTensor{X}{1}{n}{\alpha } $ is bounded. Let $r$ be fixed and let  $w,z \in \mathcal{S}^r_{X_1,\ldots,X_n}$. Then, $d_{\alpha}(w,z)=\|w-z\|_{{\alpha}}=\|\Psi(w-z)\|_{{\alpha}}\leq \|\Psi\| \cdot \|w-z\|_{{\beta}}=\|\Psi\|d_{\beta}(w,z)$. Consequently, the sequence is bounded by $\|\Psi\|$.

   To prove  the reciprocal statement, let $w,z \in \Tensor{X}{1}{n}{} $ and consider an upper bound $C$ of  the sequence. Since $w,z\in \mathcal{S}_{{\spaces{X}{1}{n}}}^r$ for some $r\in \mathbb{N}$, we have that   $\|w-z\|_{{\alpha}}=d_{\alpha}(w,z)\leq C d_{\beta}(w,z)=C \|w-z\|_{{\beta}}$ holds. Then,  the linear mapping  $\Psi_{|{\Tensor{X}{1}{n}{\beta}}}$ is   continuous. It can be extended to the completed space
  $  \CTensor{X}{1}{n}{\beta}$  preserving its norm, $C$.
  \end{proof}
Whenever  $X$ is infinite dimensional and $n>2$, the sequence $\{\sigma_{\spaces{X}{}{}}^r(\pi, \epsilon)\}_r$ is unbounded. This follows from John's result  (\cite{John1}, \cite{John2}) stating that  in such  case $X\hat{\otimes}_{\pi}\stackrel{n}{\cdots}\hat{\otimes}_{\pi}X \not\simeq  X\hat{\otimes}_{\epsilon}\stackrel{n}{\cdots}\hat{\otimes}_{\epsilon}X$.

     The case $n=2$ is different: In \cite{Groth},  A. Grothendieck wrote, "\underline{Comparaison {de} $E\hat{\otimes} F$ {et}  $E{\check{\otimes}} F$ }.\footnote{In our notation, $E\hat{\otimes}_{\pi} F$ and   $E{\hat{\otimes}_{\epsilon}} F, resp.$} Il est bien probable que si ces deux espaces sont identiques, $E$ ou $F$ est de dimension finie''.  Spaces  where $E\hat{\otimes}_{\pi} F \not\simeq E{\hat{\otimes}_{\epsilon}} F$ abound, but  they are not  all the infinite dimensional Banach spaces:    G. Pisier  in  \cite{Pisier},  provided examples of  infinite dimensional Banch spaces $X$ such that  $X\hat{\otimes}_\pi X \simeq X\hat{\otimes}_\epsilon X$.  To  prove  \cite[Theorem 3.2]{Pisier}, he shows  the existence of a constant $C>0$ such that, for every $k\in \mathbb{N}$ and for every $u\in S^k_{X, X}$, $\|u\|_{\pi}\leq C \|u\|_{\epsilon}$, which is   a bound of the sequence $\{\sigma_{X,X}^r(\pi, \epsilon)\}_r$.

\

\subsection{ On the completeness of ($S^r_{X_1,\ldots, X_n}, d_{\alpha})$.}

\

 The following table summarizes  the completeness properties of the set $S^r_{X_1,\ldots, X_n}$ according to rank and order,  in the case of infinite dimensional spaces. Recall that the completeness  of the set $S^r_{X_1,\ldots, X_n}$ (which  is the same as its  closedness in $\CTensor{X}{1}{n}{\alpha}$) is  independent of the reasonable cross  norm $\alpha$ considered on $\Tensor{X}{1}{n}{}$ (Corollary \ref{coro: completeness equiv all tensor norm}). The analogous  results for finite dimensional spaces   are  known;  their proofs   and their relation with the so-called Eckart-Young type approximation problems  can be found  in   \cite{DSilva Lim}.

\

\begin{center}
    \begin{tabular}{ | l |  l | p{8cm} |}
    rank  &  order &    \\ \hline
    $r=1$  & $n\geq 1 $ &   $ S^1_{X_1,\ldots, X_n}$,  the Segre cone $\Sigma_{X_1,\ldots, X_n}$,  is   closed.  %(Proposition \ref{prop: cSegre cerrado en tensor})
    \\ \hline
  $r\geq 1 $ & $n=2$  &
                                $S^r_{X_1, X_2}$ is  closed.
                               \\ \hline
    $r\geq 2$  & $n\geq 3$ &   $S^r_{X_1,\ldots, X_n}$  is not closed.  %(Proposition \ref{prop: Sr not closed}).
    \\
    \hline
    \end{tabular}
\end{center}

\

 The next lemmas collect some technical facts that  will be used  to prove the results in the table.   We  will use the notation $x_1\otimes \stackrel{{j}}{\stackrel{\vee}\cdots}\otimes x_n$  meaning $x_1\otimes\cdots \otimes x_{j-1}\otimes  x_{j+1}\otimes\cdots\otimes x_n$.

\begin{lemma}\label{lemma: linearly indep tensors} Given $n$ Banach  spaces  $\spaces{X}{1}{n}$,
for $j=\{1,\ldots, n\}$, let $ \{{x_j}^1, \ldots, x_j^{d_j}\}$ be a set of linearly independent vectors in $X_j$. Then,
\begin{enumerate}
\item[(i)]  The set $\{x^{k_1}_1\otimes\cdots\otimes x^{k_n}_n\}_{\substack{k_1=1,\ldots,d_1 \\\hspace{.4cm}\ldots \\  k_n=1,\ldots,d_n }}$ consist of linearly independent vectors in   $\Tensor{X}{1}{n}{}$.
\item[(ii)]  Let  $w=\sum_{i=1}^{r_w} w_i^1\otimes\cdots \otimes w_i^n \in \Tensor{X}{1}{n}{}$ be a  minimal decomposition, that is,  $Rank(w)=r_w$. Then,  for each $j\in\{1,\ldots,n\}$, the following set consists of linearly independent vectors:
\begin{align*} \{w_1^1\otimes \stackrel{{j}}{\stackrel{\vee}\cdots}\otimes w_1^n,\ldots,  w_{r_w}^1\otimes \stackrel{{j}}{\stackrel{\vee}\cdots}\otimes w_{r_w}^n\}\subset X_1\otimes\stackrel{{j}}{\stackrel{\vee}{\cdots}}\otimes X_n.
\end{align*}

\item[(iii)] For each $j=1,\ldots, n$,  let   $X_j=Y_j\oplus <z_0^j>$, with $0\neq z_0^j\in X_j$.   If  $x=y+z_0$, where  $y\in \Tensor{Y}{1}{n}{}$ and $z_0=z_0^1\otimes\cdots\otimes z_0^n$, then $Rank(x)=Rank(y)+1$.
 Also, the rank of $y$ is the same in $\Tensor{Y}{1}{n}{}$ than  in $\Tensor{X}{1}{n}{}$
\end{enumerate}
\end{lemma}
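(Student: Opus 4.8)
\textbf{Proof proposal for Lemma \ref{lemma: linearly indep tensors}.}

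The plan is to treat the three items in order, building each on the previous one, and to work throughout with the algebraic tensor product $\Tensor{X}{1}{n}{}$, where everything reduces to linear algebra in finitely many dimensions. For (i), I would extend each set $\{x_j^1,\ldots,x_j^{d_j}\}$ to a basis of the (finite-dimensional) span it generates inside $X_j$, choose biorthogonal functionals $\{\phi_j^1,\ldots,\phi_j^{d_j}\}$ on $X_j$ with $\phi_j^k(x_j^l)=\delta^k_l$ (possible by Hahn--Banach, or just by linear algebra on the spans), and test a putative linear dependence $\sum_{k_1,\ldots,k_n}\lambda_{k_1\cdots k_n}\,x_1^{k_1}\otimes\cdots\otimes x_n^{k_n}=0$ against the functional $\phi_1^{l_1}\otimes\cdots\otimes\phi_n^{l_n}$; this isolates the single coefficient $\lambda_{l_1\cdots l_n}$ and forces it to vanish. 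This is the base case and it is essentially immediate.

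For (ii), I would argue by contradiction: if the vectors $w_1^1\otimes\stackrel{j}{\stackrel{\vee}\cdots}\otimes w_1^n,\ldots,w_{r_w}^1\otimes\stackrel{j}{\stackrel{\vee}\cdots}\otimes w_{r_w}^n$ were linearly dependent, say $w_{r_w}^1\otimes\stackrel{j}{\stackrel{\vee}\cdots}\otimes w_{r_w}^n=\sum_{i=1}^{r_w-1}\mu_i\,w_i^1\otimes\stackrel{j}{\stackrel{\vee}\cdots}\otimes w_i^n$ after reindexing, then I would substitute this into the $j$-th slot of the decomposition of $w$, using multilinearity to absorb the scalars $\mu_i$ into the $j$-th factors: $w=\sum_{i=1}^{r_w-1} w_i^1\otimes\cdots\otimes(w_i^j+\mu_i w_{r_w}^j)\otimes\cdots\otimes w_i^n$. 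This exhibits a decomposition of $w$ with $r_w-1$ terms, contradicting minimality. The one point requiring care is the bookkeeping of which factor the scalars are pushed into and making sure the omitted slot is consistently the $j$-th one; I would write $\hat w_i := w_i^1\otimes\stackrel{j}{\stackrel{\vee}\cdots}\otimes w_i^n$ to keep the notation clean.

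For (iii), write $x=y+z_0$ with $y\in\Tensor{Y}{1}{n}{}$ and $z_0=z_0^1\otimes\cdots\otimes z_0^n$. The inequality $\mathrm{Rank}(x)\le\mathrm{Rank}(y)+1$ is clear from concatenating a minimal decomposition of $y$ with the single term $z_0$. For the reverse inequality $\mathrm{Rank}(x)\ge\mathrm{Rank}(y)+1$, I would use the direct sum projections: let $P_j:X_j\to Y_j$ and $q_j:X_j\to\langle z_0^j\rangle$ be the coordinate projections associated to $X_j=Y_j\oplus\langle z_0^j\rangle$, and consider the induced projections on the tensor product. Given any decomposition $x=\sum_{i=1}^{r}u_i^1\otimes\cdots\otimes u_i^n$, apply $P_1\otimes\cdots\otimes P_n$ to recover a decomposition of $y$ with at most $r$ terms, so $\mathrm{Rank}(y)\le r$; to gain the extra $+1$, I would argue that the component of $x$ along $\langle z_0^1\rangle\otimes\cdots\otimes\langle z_0^n\rangle$ is the nonzero tensor $z_0$, which cannot be produced by the $\le\mathrm{Rank}(y)$ "mixed" terms alone — more precisely, decompose the identity on each $X_j$ as $P_j+q_j$, expand $\bigotimes_j(P_j+q_j)$ applied to the decomposition of $x$, and observe that the purely-$q$ summand $q_1\otimes\cdots\otimes q_n$ sends $x$ to $z_0\ne 0$, so at least one term $u_i^1\otimes\cdots\otimes u_i^n$ has all its factors with nonzero $z_0^j$-component; removing such a term (and correcting by the rank-$\le 1$ difference) leaves a decomposition of something of rank $\ge r-1$ lying in $\Tensor{Y}{1}{n}{}$ after projection, and a careful count yields $r\ge\mathrm{Rank}(y)+1$. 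The final sentence, that $\mathrm{Rank}(y)$ is the same computed in $\Tensor{Y}{1}{n}{}$ or in $\Tensor{X}{1}{n}{}$, follows because $\Tensor{Y}{1}{n}{}$ sits as a direct summand of $\Tensor{X}{1}{n}{}$ via $\bigotimes_j P_j$, so any $X$-decomposition of $y$ projects to a $Y$-decomposition of the same or smaller length, while the reverse inequality is trivial.

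\textbf{Main obstacle.} The genuinely delicate point is the reverse rank inequality in (iii): getting the honest $+1$ rather than just $\mathrm{Rank}(x)\ge\mathrm{Rank}(y)$. The clean way to see it is the multigrading of $\Tensor{X}{1}{n}{}$ by the decomposition $\bigotimes_j(Y_j\oplus\langle z_0^j\rangle)=\bigoplus_{S\subseteq\{1,\ldots,n\}}\bigotimes_{j\in S}\langle z_0^j\rangle\otimes\bigotimes_{j\notin S}Y_j$: the element $x$ has nonzero component only in the all-$Y$ summand (equal to $y$) and the all-$z_0$ summand (equal to $z_0$), and one must show that no decomposition of $x$ of length $\mathrm{Rank}(y)$ can have its image under the all-$z_0$ projection equal to $z_0\ne 0$ while simultaneously projecting to all of $y$. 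I expect this to require either part (i)/(ii) applied to a minimal decomposition of $y$, or an induction on $\mathrm{Rank}(y)$; I would present it as the technical heart of the lemma.
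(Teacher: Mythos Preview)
Your treatments of (i) and (ii) are correct and match the paper, which simply cites them as well-known algebraic facts; your substitution argument for (ii) is exactly the standard one.

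For (iii) you have set up the multigrading correctly and rightly identified (ii) as the key tool, but the proposed execution has a gap. The ``remove a term and correct by a rank-$\le 1$ difference'' idea does not lead to a clean count: if some $u_i^1\otimes\cdots\otimes u_i^n$ has all its $z_0^j$-components nonzero, removing it from the decomposition of $x$ leaves $x$ minus a rank-one tensor, and that rank-one tensor generally has a nonzero all-$Y$ component, so you do not obtain a decomposition of $y$ with $r-1$ terms. The all-$z_0$ summand is in fact a red herring here.

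The paper's execution is this. Argue by contradiction: assume $r_x\le r_y$ and write $x=\sum_{i=1}^{r_x}w_i^1\otimes\cdots\otimes w_i^n$. Projecting by $P_1\otimes\cdots\otimes P_n$ gives $y=\sum_{i=1}^{r_x}P_1(w_i^1)\otimes\cdots\otimes P_n(w_i^n)$, a decomposition of $y$ with $r_x\le r_y$ terms; minimality forces $r_x=r_y$, so this projected decomposition of $y$ is itself \emph{minimal}, and (ii) now applies to it. Next look not at the all-$z_0$ summand but at a single-$z_0$ summand, say $\langle z_0^1\rangle\otimes Y_2\otimes\cdots\otimes Y_n$: the component of $x=y+z_0$ there vanishes, hence
\[
z_0^1\otimes\Bigl(\sum_{i=1}^{r_y}\lambda_i^1\,P_2(w_i^2)\otimes\cdots\otimes P_n(w_i^n)\Bigr)=0,
\]
where $(I-P_1)(w_i^1)=\lambda_i^1 z_0^1$. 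Since $z_0^1\ne 0$ the bracketed sum is zero; by (ii) (for $j=1$, applied to the minimal decomposition of $y$ just obtained) the vectors $P_2(w_i^2)\otimes\cdots\otimes P_n(w_i^n)$ are linearly independent, so all $\lambda_i^1=0$, i.e.\ every $w_i^1\in Y_1$. Repeating for each coordinate forces $w_i^j\in Y_j$ for all $i,j$, whence $x\in\Tensor{Y}{1}{n}{}$, contradicting $z_0\ne 0$. The two points you were missing are: the contradiction hypothesis $r_x\le r_y$ is precisely what makes the projected decomposition of $y$ minimal and so makes (ii) available; and the summands that carry the argument are those with exactly one $z_0$-factor, not the top one. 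Your proof of the final sentence (equality of the two ranks of $y$ via projecting an $X$-decomposition) is the same as the paper's.
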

\begin{proof} The first and second assertions are  well known facts    which do not involve norms or continuity. They can be found in \cite[Chapter 3]{Land}. For the third one, we must keep in mind the continuity of the mappings involved.
For each $j=1,\ldots, n$, let  $\Pi_j\in \mathcal{L}(X_j, X_j)$ be   the bounded projection associated to  the decomposition   $X_j=\Pi_j(X_j)\oplus (I_j-\Pi_j)(X_j)=Y_j\oplus <z_0^j>$. They determine, in turn,
the following decomposition of the projective tensor product:
\begin{equation*}
\CTensor{X}{1}{n}{\pi}=(\CTensor{Y}{1}{n}{\pi})\oplus (<z_0^1>\hat{\otimes}_{\pi} Y_2\hat{\otimes}_{\pi}\cdots\hat{\otimes}_{\pi}  Y_n)\oplus \cdots \oplus (<z_0^1\hat{\otimes}_{\pi} \cdots \hat{\otimes}_{\pi}  z_0^n>).
\end{equation*}

Now, le  $x=y+z_0$ be an element as in  $(iii)$. It is clear that  that  $r_x\leq r_y+1$. We will see that the strict inequality never holds. To that end,  assume that $r_x\leq r_y$. Let  $x=\sum_{i=1}^{r_x} w_i^1\otimes\cdots \otimes w_i^n$.  Applying  to $x$ the   direct sum  decomposition stated above, we  get
$$
x= (\sum_{i=1}^{r_x}\Pi_1(w_i^1)\otimes \cdots \otimes \Pi_n(w_i^n)) \oplus (\sum_{i=1}^{r_x}(I_1-\Pi_1)(w_i^1){\otimes} \Pi_2(w_i^2)\otimes \cdots \otimes \Pi_n(w_i^n)) \oplus \cdots \oplus ( \lambda z_0^1\otimes \cdots\otimes z_0^n).
$$

Since $x=y+z_0$ is  also a decomposition  according to the same direct sum,  then necessarily  $\lambda =1$, every intermediate term is zero and $y=\sum_{i=1}^{r_x}\Pi_1(w_i^1)\otimes \cdots \otimes \Pi_n(w_i^n)$.  This gives a tensor decomposition of $y$ with $r_x\leq r_y$, thus, it must be $r_x=r_y$ and, consequently, (ii) holds for this expression of $y$.

Consider now the following  intermediate term of de decomposition of $x$:
\begin{align*} \sum_{i=1}^{r_y} (I_1-\Pi_{1})&(w_i^1)\otimes  \Pi_2(w_i^2)\otimes \cdots \otimes \Pi_n(w_i^n)=\sum_{i=1}^{r_y} (\lambda_i^1 z_0^1) \otimes\Pi_2(w_i^2)\otimes \cdots \otimes \Pi_n(w_i^n)= \\
                                                                & z_0^1  \otimes(\sum_{i=1}^{r_y} \lambda_i^1 \Pi_2(w_i^2)\otimes \cdots \otimes \Pi_n(w_i^n))=0.
\end{align*}

Then, $\sum_{i=1}^{r_y} \lambda_i^1 \Pi_2(w_i^2)\otimes \cdots \otimes \Pi_n(w_i^n)=0$.  This is a linear combination of linearly independent vectors  ((ii) for $j=1$), thus,
$\lambda_1^1=\ldots=\lambda_{r_y}^1=0$, that is, $w_i^1\in Ker(I_1-\Pi_1)=Y_1$, for every $i=1,\ldots, r_y$.
Arguing in an analogous way with the rest of the indices, $j=2,\ldots,n$, we get that $w_i^j\in Y_j$, for every $i=1,\ldots, r_y$. But this implies that $x=\sum_{i=1}^{r_y} w_i^1\otimes\cdots \otimes w_i^n\in \Tensor{Y}{1}{n}{}$, which is not possible, since $x=y+z_0$ and  $z_0\neq 0$.
This contradiction implies that $r_x>r_y$. Consequently,  $r_x=r_y +1$.
The assertion on the rank of $y$ is proved as follows:  let $R_X$ and $R_Y$ denote  the ranks of $y$ in $\Tensor{X}{1}{n}{}$ and $\Tensor{Y}{1}{n}{}$, respectively. Clearly, $R_X\leq R_Y$. To prove the other inequality, let     $y=\sum_{i=1}^{R_X}  x_i^1\otimes \cdots\otimes x_i^n$.  Since $y=(\Pi_1\otimes\cdots\otimes\Pi_n)(y)=\sum_{i=1}^{R_X} \Pi_1(x_i^1)\otimes \cdots \otimes \Pi_n(x_i^n)$, we have  a tensor decomposition of $y$ in $\Tensor{Y}{1}{n}{}$ with $R_X$ terms. This implies $R_Y\leq R_X$.
 \end{proof}

 \begin{lemma}\label{lemma: minimal rank} Let $z\in X_1\hat{\otimes}_{\epsilon}X_2$ and $r\in \mathbb{N}$. The minimal rank of $z$ satisfies  $r_{z}\leq r$ if and only if for every $\{\phi_1,\ldots ,\phi_{r+1}\}\subset X_1^*$, the set
 $\{(\phi_1\otimes Id)(z),\ldots ,(\phi_{r+1}\otimes Id)(z)\}$ is linearly dependent in $X_2$.
 \end{lemma}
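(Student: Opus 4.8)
\textbf{Proof plan for Lemma \ref{lemma: minimal rank}.}

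The plan is to work with the identification $X_1\hat\otimes_\epsilon X_2\hookrightarrow \mathcal{L}(X_1^*,X_2)$ given by $z\mapsto (\phi\mapsto (\phi\otimes Id)(z))$; call this operator $L_z$. Since the $\epsilon$-norm is exactly the operator norm of $L_z$ (this is the $n=2$ case of the description of $\|\cdot\|_\epsilon$ recalled in the proof of Proposition \ref{prop: tensor subspaces}, with $L_z$ acting on $X_1^*$), the finite-rank elements $z\in X_1\otimes X_2$ correspond precisely to the finite-rank operators $L_z$, and moreover the tensor rank $r_z$ equals the rank (dimension of the range) of $L_z$. The statement to prove then becomes: $\operatorname{rank}(L_z)\le r$ iff for every $\phi_1,\dots,\phi_{r+1}\in X_1^*$ the vectors $L_z(\phi_1),\dots,L_z(\phi_{r+1})$ are linearly dependent in $X_2$. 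This is the purely linear-algebraic fact that a linear map has range of dimension $\le r$ iff the images of any $r+1$ elements of the domain are dependent, so once the dictionary is set up the lemma is essentially automatic.

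First I would make the correspondence precise. Given a minimal decomposition $z=\sum_{i=1}^{r_z} x_i^1\otimes x_i^2$, one has $L_z(\phi)=\sum_{i=1}^{r_z}\phi(x_i^1)\,x_i^2$, so $\operatorname{range}(L_z)\subseteq \operatorname{span}\{x_1^2,\dots,x_{r_z}^2\}$ and hence $\operatorname{rank}(L_z)\le r_z$; conversely, picking $\psi_1,\dots,\psi_m\in X_1^*$ with $L_z(\psi_t)$ spanning $\operatorname{range}(L_z)$ and rewriting $z$ through a basis of $\operatorname{span}\{L_z(\psi_t)\}$ using that the $x_i^1$'s in a minimal decomposition are linearly independent (Lemma \ref{lemma: linearly indep tensors}(ii) for $n=2$, $j=2$), one gets $r_z\le \operatorname{rank}(L_z)$. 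So $r_z=\operatorname{rank}(L_z)$, and in particular $r_z\le r$ exactly when $\dim\operatorname{range}(L_z)\le r$.

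Then I would prove the finite-dimensional linear equivalence. If $\dim\operatorname{range}(L_z)\le r$, then any $r+1$ vectors in the range — in particular $L_z(\phi_1),\dots,L_z(\phi_{r+1})$ — are dependent. Conversely, if $\dim\operatorname{range}(L_z)\ge r+1$, choose $\phi_1,\dots,\phi_{r+1}\in X_1^*$ whose images under $L_z$ are linearly independent (possible since the range is spanned by such images, and one can extract an independent family among the generators), contradicting the hypothesis; hence $\dim\operatorname{range}(L_z)\le r$. Combining with $r_z=\operatorname{rank}(L_z)$ gives the lemma.

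The only mild subtlety — and the step I would be most careful about — is the inequality $r_z\le\operatorname{rank}(L_z)$, i.e. that the operator rank is not \emph{strictly} smaller than the tensor rank. This is where one must use that in a \emph{minimal} decomposition the left factors $x_1^1,\dots,x_{r_z}^1$ are linearly independent: if $L_z$ had rank $m<r_z$, then writing each $x_i^2$ in terms of an $m$-dimensional spanning set of the range would express $z$ with only $m$ elementary tensors (grouping the left factors), contradicting minimality. Once this is in hand, everything else is routine. Note the lemma is genuinely two-space ($n=2$): it is exactly the passage to $\mathcal{L}(X_1^*,X_2)$ via the injective norm that makes rank behave like operator rank, which is why the $n\ge 3$ row of the table behaves differently.
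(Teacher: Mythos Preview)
Your approach is essentially the same as the paper's: both identify $z\in X_1\hat\otimes_\epsilon X_2$ with the operator $L_z\in\mathcal{L}(X_1^*,X_2)$ (the paper writes it as $\delta_z$ via $X_1\hat\otimes_\epsilon X_2\hookrightarrow X_1^{**}\hat\otimes_\epsilon X_2\hookrightarrow\mathcal{L}(X_1^*,X_2)$), observe that tensor rank equals operator rank, and then use the elementary fact that a linear map has rank $\le r$ iff any $r+1$ images are dependent. You supply more detail on the equality $r_z=\operatorname{rank}(L_z)$ than the paper does; the paper simply asserts it, adding only the remark ``This is true also in the case where both are infinite,'' whereas your argument for $r_z\le\operatorname{rank}(L_z)$ as written presupposes a finite minimal decomposition---so if you want full rigor you should note separately that $\operatorname{rank}(L_z)=m<\infty$ forces $z\in X_1\otimes X_2$ with $r_z\le m$ (pick a basis $w_1,\dots,w_m$ of the range, dual functionals $w_j^*\in X_2^*$, and set $g_j=(Id\otimes w_j^*)(z)\in X_1$; then $z=\sum_j g_j\otimes w_j$).
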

 \begin{proof} We will use the natural  isometries  $X_1\hat{\otimes}_{\epsilon}X_2\hookrightarrow X_1^{**}\hat{\otimes}_{\epsilon}X_2\hookrightarrow  \mathcal{L}(X_1^*, X_2)$, $z\mapsto \delta_z,$ (see \cite[p. 46]{Ryan-libro}). The  rank as a tensor  of a  $z\in X_1\hat{\otimes}_{\epsilon}X_2$, $r_z,$  coincides with its rank as a linear mapping, $rk (\delta_z):= dim \{\delta_z(X_1^*)\}= dim \, span\{(\phi\otimes Id)(z); \phi \in X_1^*\}$. This is true also in  the case where both are infinite. By other hand, we have   that  $rk(\delta_z)\leq r$ if and only if  for every $\{\phi_1,\ldots ,\phi_{r+1}\}\subset X_1^*$, the set
 $\{(\phi_1\otimes Id)(z),\ldots ,(\phi_{r+1}\otimes Id)(z)\}$ is linearly dependent in $X_2$. Then, the lemma follows.
 \end{proof}

\begin{proposition}\label{prop: Sr Hilbert spaces} Let $X_1$ and $X_2$ be Banach spaces and let  $\alpha$  be a reasonable cross norm. Then, for every $r\geq 1$, the metric space $(S^r_{X_1, X_2}, d_{\alpha})$ is complete.
\end{proposition}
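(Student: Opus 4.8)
The plan is to show that $(S^r_{X_1,X_2}, d_\alpha)$ is closed in $\CTensor{X}{1}{2}{\alpha}$; by Corollary \ref{coro: completeness equiv all tensor norm} it suffices to work with a single convenient norm, and $\epsilon$ is the natural choice because of the isometric embedding $X_1\hat{\otimes}_\epsilon X_2 \hookrightarrow \mathcal{L}(X_1^*, X_2)$, $z\mapsto\delta_z$, together with the rank characterization of Lemma \ref{lemma: minimal rank}. So the first step is to take a sequence $(z_m)\subset S^r_{X_1,X_2}$ with $z_m\to z$ in $X_1\hat{\otimes}_\epsilon X_2$, and to prove $z\in S^r_{X_1,X_2}$, i.e.\ $r_z\leq r$.

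Next I would translate convergence in the $\epsilon$-norm into a statement about the operators $\delta_{z_m}\to\delta_z$ in $\mathcal{L}(X_1^*,X_2)$ (operator norm), which is exactly what the isometry gives. Fix an arbitrary set $\{\phi_1,\dots,\phi_{r+1}\}\subset X_1^*$. For each $m$, Lemma \ref{lemma: minimal rank} tells us the vectors $(\phi_i\otimes Id)(z_m)=\delta_{z_m}(\phi_i)\in X_2$, $i=1,\dots,r+1$, are linearly dependent; and $\delta_{z_m}(\phi_i)\to\delta_z(\phi_i)=(\phi_i\otimes Id)(z)$ in $X_2$. The core of the argument is then the elementary fact that, in any normed space, a norm-limit of $(r+1)$-tuples each of which is linearly dependent is again linearly dependent. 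Once this is established for every choice of $\phi_1,\dots,\phi_{r+1}$, the converse direction of Lemma \ref{lemma: minimal rank} yields $r_z\leq r$, hence $z\in S^r_{X_1,X_2}$, so the set is closed and therefore complete (being a closed subset of the Banach space $X_1\hat{\otimes}_\epsilon X_2$).

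The main obstacle is that stability of linear dependence under limits is \emph{not} automatic for tuples (e.g.\ $(1/m, 0)\to(0,0)$ shows the relevant "coefficient vectors" can degenerate), so this step needs care. I would handle it by a normalization/compactness argument: for each $m$ pick coefficients $(c^m_1,\dots,c^m_{r+1})\in\mathbb{K}^{r+1}$, not all zero, with $\sum_i c^m_i\,\delta_{z_m}(\phi_i)=0$ and $\max_i|c^m_i|=1$. By compactness of the unit sphere of $\mathbb{K}^{r+1}$, pass to a subsequence with $c^m\to c$, where $\max_i|c_i|=1$ so $c\neq 0$; then passing to the limit in $\sum_i c^m_i\,\delta_{z_m}(\phi_i)=0$, using $\delta_{z_m}(\phi_i)\to\delta_z(\phi_i)$ and boundedness of the $c^m_i$, gives $\sum_i c_i\,\delta_z(\phi_i)=0$ with $c\neq 0$, the desired dependence. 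I should also note at the outset the degenerate remark that the argument uses $n=2$ only through Lemma \ref{lemma: minimal rank}, which is genuinely special to two factors.
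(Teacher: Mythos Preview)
Your proposal is correct and follows essentially the same route as the paper: reduce to the injective norm via Corollary~\ref{coro: completeness equiv all tensor norm}, use Lemma~\ref{lemma: minimal rank} to translate rank~$\leq r$ into linear dependence of any $(r+1)$-tuple $\{(\phi_i\otimes Id)(z)\}$, and pass linear dependence to the limit by normalizing the coefficient vectors and using compactness of the unit sphere in $\mathbb{K}^{r+1}$. The only cosmetic differences are that the paper normalizes in $\ell^2$ rather than $\ell^\infty$ and does not explicitly name the isometry $z\mapsto\delta_z$.
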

\begin{proof}   By  Corollary \ref{coro: completeness equiv all tensor norm}, it is enough to prove the result in the case where  $\alpha$ is the injective norm $\epsilon$. Let  $z\in \overline{S^r_{X_1, X_2}}$ and let  $(z_k)_k\subset S^r_{X_1, X_2}$ such that
$z_k\stackrel{\epsilon}{\rightarrow} z$.  For each $k$, let  $z_k=w_k^1\otimes y_k^1 + \ldots +w_k^r\otimes y_k^r$ be a tensor decomposition of $z_k$ (not necessarily minimal). Let $\{\phi_1,\ldots \phi_{r+1}\} \subset X_1^*$.  By the previous lemma, we know that the set $\{(\phi_1\otimes Id)(z_k),\ldots ,(\phi_{r+1}\otimes Id)(z_k)\}$ consists of linearly dependent vectors. Then, there exist scalars
$(a_{k,l})_{l=1}^{r+1}$  such that $\sum_{l=1}^{r+1}a_{k,l}(\phi_l\otimes Id)(z_k)=0.$ We can assume that the scalars are normalized, satisfying $\sum_{l=1}^{r+1}|a_{k,l}|^2=1$.  Then, $A_k:=(a_{k,1},\ldots,a_{k,r+1} ) \subset \mathbb{R}^{r+1}$ is a bounded set in a finite dimensional space which, necessarily,  has a subsequence that converges to some point $A=(a_{1},\ldots,a_{r+1} )$ with $\sum_{l=1}^{r+1}|a_l|^2=1$. Since $z_k\stackrel{\epsilon}{\rightarrow} z$, we have that $$0=\sum_{l=1}^{r+1}a_{k,l}(\phi_l\otimes Id)(z_k))\;\;{\longrightarrow}\;\;\sum_{l=1}^{r+1}a_{l}(\phi_l\otimes Id) (z).$$
Then,  $(\phi_1\otimes Id)(z),\ldots ,(\phi_{r+1}\otimes Id)(z)$ are linearly dependent vectors.  Using the previous lemma again, we get that $z$ has rank $\leq r$ , that is,  $z \in S^r_{X_1, X_2}$.
\end{proof}

 \begin{proposition}\label{prop: Sr not closed}   Let  $n>2$ and let      $\spaces{X}{1}{n}$ be Banach  spaces   of infinite  dimension. If  $\alpha$   is  a reasonable cross norm  on  $\Tensor{X}{1}{n}{}$ and    $r\geq 2$,   then the metric space  $(\mathcal{S}^r_{X_1,\ldots,X_n}, d_{\alpha})$ is not complete.
\end{proposition}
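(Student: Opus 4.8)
The plan is to produce, for every admissible $r$ and $n$, a $d_\alpha$-Cauchy sequence in $\mathcal{S}^r_{X_1,\ldots,X_n}$ whose limit in $\CTensor{X}{1}{n}{\alpha}$ is a tensor of rank exactly $r+1$; since such a limit is unique and cannot belong to $\mathcal{S}^r_{X_1,\ldots,X_n}$, this forces $(\mathcal{S}^r_{X_1,\ldots,X_n},d_\alpha)$ to be incomplete. The sequence is built from the classical ``border rank'' example, padded up to rank $r$. Choose, in $X_j$ for $j=1,2,3$, linearly independent vectors $a_j,b_j$; in $X_j$ for $j=4,\ldots,n$, a nonzero vector $c_j$; and decomposable tensors $e^i=e_1^i\otimes\cdots\otimes e_n^i$, $i=1,\ldots,r-2$, chosen so that $\{e_j^1,\ldots,e_j^{r-2},a_j,b_j\}$ is linearly independent in $X_j$ for $j\le 3$ and $\{e_j^1,\ldots,e_j^{r-2},c_j\}$ is linearly independent in $X_j$ for $j\ge 4$ (possible because the $X_j$ are infinite dimensional). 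Set
\[
z_k:=\sum_{i=1}^{r-2}e^i\;+\;\Big(k\,(a_1+\tfrac1k b_1)\otimes(a_2+\tfrac1k b_2)\otimes(a_3+\tfrac1k b_3)-k\,a_1\otimes a_2\otimes a_3\Big)\otimes c_4\otimes\cdots\otimes c_n .
\]

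First I would record two easy points. The bracketed term is a difference of two decomposable tensors, so tensoring it with $c_4\otimes\cdots\otimes c_n$ still gives rank $\le 2$; hence $r_{z_k}\le (r-2)+2=r$, i.e. $z_k\in\mathcal{S}^r_{X_1,\ldots,X_n}$. Expanding the triple product and cancelling the term $k\,a_1\otimes a_2\otimes a_3$ shows that $z_k-z$ is a fixed finite linear combination of decomposable tensors whose coefficients are $O(1/k)$, where
\[
z:=\sum_{i=1}^{r-2}e^i\;+\;\big(a_1\otimes a_2\otimes b_3+a_1\otimes b_2\otimes a_3+b_1\otimes a_2\otimes a_3\big)\otimes c_4\otimes\cdots\otimes c_n .
\]
Since $\alpha\le\pi$ and $\pi$ is a cross norm, $d_\alpha(z_k,z)=\alpha(z_k-z)\le\pi(z_k-z)=O(1/k)\to 0$; thus $(z_k)_k$ converges to $z$ in $\CTensor{X}{1}{n}{\alpha}$ and, in particular, is $d_\alpha$-Cauchy in $\mathcal{S}^r_{X_1,\ldots,X_n}$.

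The heart of the proof is the claim $r_z=r+1$. I would obtain it by peeling off the summands $e^i$ one at a time using Lemma~\ref{lemma: linearly indep tensors}(iii): the linear independence arranged above guarantees that at each stage every $X_j$ splits as $Y_j\oplus\langle e_j^i\rangle$ with the remaining tensor lying in $\Tensor{Y}{1}{n}{}$, so each peeling lowers the rank by exactly one. After $r-2$ steps this gives $r_z=(r-2)+r_W$, where $W=w\otimes c_4\otimes\cdots\otimes c_n$ and $w=a_1\otimes a_2\otimes b_3+a_1\otimes b_2\otimes a_3+b_1\otimes a_2\otimes a_3$. Since tensoring with nonzero vectors leaves the rank unchanged, and since rank is invariant under restriction to complemented subspaces containing the tensor (the last assertion of Lemma~\ref{lemma: linearly indep tensors}(iii)), $r_W=r_w$ may be computed inside $\mathrm{span}\{a_1,b_1\}\otimes\mathrm{span}\{a_2,b_2\}\otimes\mathrm{span}\{a_3,b_3\}\cong\mathbb{C}^2\otimes\mathbb{C}^2\otimes\mathbb{C}^2$, where $w$ is the $W$-state, whose tensor rank equals $3$ (classical; see \cite{DSilva Lim} and \cite{Land}). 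Hence $r_z=(r-2)+3=r+1$.

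Finally, if $(\mathcal{S}^r_{X_1,\ldots,X_n},d_\alpha)$ were complete, the $d_\alpha$-Cauchy sequence $(z_k)_k$ would converge in it to some $z'\in\mathcal{S}^r_{X_1,\ldots,X_n}$; but then $z_k\to z'$ in $\CTensor{X}{1}{n}{\alpha}$ as well, so $z'=z$ by uniqueness of limits, contradicting $r_z=r+1>r$. The step I expect to be the genuine obstacle is the rank lower bound $r_w\ge 3$ for the $W$-state: everything else is bookkeeping with Lemma~\ref{lemma: linearly indep tensors}, whereas the strict gap between rank and border rank is exactly the classical phenomenon responsible for the non-closedness, and establishing $r_w=3$ requires the flattening/pencil analysis in $\mathbb{C}^2\otimes\mathbb{C}^2\otimes\mathbb{C}^2$ rather than any soft argument.
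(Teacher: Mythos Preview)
Your argument is correct and follows the same underlying mechanism as the paper---the border-rank gap of the $W$-tensor---but the organization differs in two respects worth noting.

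First, the paper runs the limiting construction across all $n$ factors at once, obtaining as limit the full $n$-factor $W$-tensor $\sum_{j=1}^n z_1\otimes\cdots\otimes w_j\otimes\cdots\otimes z_n$, and then proves directly (via annihilators and a biorthogonal-functional case analysis) that this tensor has rank $>2$. Your version confines the border-rank trick to three factors and carries the remaining $n-3$ factors as fixed constants $c_4,\ldots,c_n$; this keeps the rank computation inside $\mathbb{K}^2\otimes\mathbb{K}^2\otimes\mathbb{K}^2$ at the cost of invoking the classical fact $r_w=3$. The paper's route is self-contained; yours is cleaner but imports that one result (which, incidentally, can be shown by exactly the kind of functional argument the paper uses for its $n$-factor version, so the ``flattening/pencil analysis'' you anticipate is not strictly necessary).

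Second, for $r>2$ the paper inducts: it passes to hyperplanes $Y_j\subset X_j$, pulls a bad sequence from $\mathcal{S}^{r-1}_{Y_1,\ldots,Y_n}$, and adds a single simple tensor $z_0$ via Lemma~\ref{lemma: linearly indep tensors}(iii). You instead pad by $r-2$ generic simple tensors up front and peel them all off with the same lemma. These are equivalent bookkeeping devices; your uniform treatment avoids the induction at the price of having to set up the linear-independence constraints on the $e^i_j$ carefully (which you did). One small remark: the last assertion of Lemma~\ref{lemma: linearly indep tensors}(iii) is stated only for hyperplanes, but its proof (projecting by $\Pi_1\otimes\cdots\otimes\Pi_n$) works for any complemented subspaces, which is what you need when you restrict to $\mathrm{span}\{a_j,b_j\}$; you are tacitly using that extension.
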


\begin{proof}  For each $i=1,\ldots, n$, consider a pair of linearly independent norm one vectors $z_i,w_i \in X_i$. For each $k\in \mathbb{N}$, let
$$x_k:=k(z_1+\frac{1}{k}w_1)\otimes\cdots\otimes(z_n+\frac{1}{k}w_n)-k (z_1\otimes\cdots\otimes z_n).$$
We have that $\{x_k\}_k\subset \mathcal{S}^2_{X_1,\ldots,X_n}$. This sequence converges  in $\CTensor{X}{1}{n}{\alpha}$ to
$$ x:= (w_1\otimes z_2 \otimes \cdots \otimes z_n) + (z_1\otimes w_2 \otimes z_3\otimes  \cdots \otimes z_n)+ \ldots + (z_1\otimes \cdots \otimes z_{n-1}\otimes w_n).  $$
 Consequently,    it is a $d_{\alpha}$-Cauchy sequence in $\mathcal{S}^2_{X_1,\ldots,X_n}$.
The proof will be done once we  check that the limit point $x$ is not an element of $\mathcal{S}^2_{X_1,\ldots,X_n}$.  We will use several times that  a set of the following type  (as well as any  of it subsets)
\begin{equation}\label{claim: lineraly independency}
\{ (z_1 \otimes \cdots \otimes z_n), (w_1\otimes z_2 \otimes \cdots \otimes z_n), (z_1\otimes w_2 \otimes \cdots \otimes z_n),  \ldots,(w_1\otimes \cdots \otimes w_n)\}
 \end{equation}
 consist of linearly independent vectors ((i) in Lemma \ref{lemma: linearly indep tensors}). This implies, in particular, that $x\neq 0$.
  If we assume  that $x\in \mathcal{S}^2_{X_1,\ldots,X_n}$, we have  two cases to check: when   $x$ has minimal  rank one, and when  $x$ has minimal rank two. If $x$ has minimal rank one,  then   $x=\Tensor{x}{1}{n}{}$ with $x_i\neq 0$. Consider any  $\phi\in (span\{x_1\})^{\perp}$.  Then
  \begin{align*}
(\phi\otimes Id)(x)=&\phi(w_1)z_2 \otimes \ldots \otimes z_n +   \cdots + \phi(z_1)z_2 \otimes\ldots \otimes  z_{n-1}\otimes w_n  =\phi(x_1^1)x_2^1\otimes\ldots \otimes x_n^1=0.
\end{align*}
 This implies that $\phi(w_1)=\phi(z_1)=0$. Consequently, $(span\{x_1\})^{\perp}\subset (span\{w_1\})^{\perp}\cap (span\{z_1\})^{\perp}$, which is not possible, since $(span\{x_1\})^{\perp}$ has codimension $1$  in $X_1^*$ while  $(span\{w_1\})^{\perp}\cap (span\{z_1\})^{\perp}$ has codimension $2$.

   In the case $x$ has minimal rank $2$, it admits a representation of the form $x=x_1\otimes\ldots \otimes x_n +y_1\otimes\ldots \otimes y_n$. Consider the two dimensional subspaces $Y_j:=span\{{x_j,y_j}\}\subset X_j,\, j=1,2$ and  their  annihilators:  $Y_j^{\perp}=\{\phi\in X_j^*;\ \phi_{|_{Y_j}}=0\}\subset X_j^*$, respectively. If $\phi\in Y_1^{\perp}$, then
 \begin{align*}
(\phi\otimes Id)(x)=&\phi(w_1)z_2 \otimes \ldots \otimes z_n +   \cdots + \phi(z_1)z_2 \otimes\ldots \otimes  z_{n-1}\otimes w_n \\ =&\phi(x_1)x_2\otimes\ldots \otimes x_n +\phi(y_1)y_2\otimes\ldots \otimes y_n=0.
\end{align*}
 This implies that $\phi(w_1)=\phi(z_1)=0$. Thus,   $\phi \in (span\{{w_1,z_1}\})^{\perp}$ and consequently  $Y_1^{\perp}\subset (span\{{w_1,z_1}\})^{\perp}$. This already implies that $Y_1=span\{{w_1,z_1}\}$.
 With an analogous argument, we get that $Y_2=span\{{w_2,z_2}\}$. For $j=1,2$,  let $\{\phi_j ,\psi_j\}\subset X_j^*$ be   {\sl biorthogonal} to $\{x_j,y_j\}\subset X_j$, respectively. Then,
   \begin{align*} (\phi_1\otimes \psi_2 \otimes Id)(x)&=\phi_1(w_1)\psi_2(z_2)z_3 \otimes \ldots \otimes z_n +  \cdots + \phi_1(z_1)\psi_2(z_2)z_3 \otimes\ldots \otimes  z_{n-1}\otimes w_n \\ & =\phi_1(x_1)\psi_2\cancel{(x_2)}x_3\otimes\ldots \otimes x_n +\phi_1\cancel{(y_1)}\psi_2(y_2)y_3 \otimes\ldots \otimes y_n=0.
 \end{align*}
  Because  $n>2$  and the vectors involving $z_i$'s and $w_i$'s are  linearly independent,  the coefficients are null.  This gives rise to the following system of equations:
 \begin{equation*}
\left.\begin{aligned}
\phi_1(w_1)\psi_2(z_2)=0 &\\
\phi_1(z_1)\psi_2(w_2)=0&\\
\phi_1(z_1)\psi_2(z_2)=0
\end{aligned}
\right\}
%\qquad \text{Maxwell's equations}
\end{equation*}
If $z_1\not\in Ker \, \phi_1$, we would have that $\psi_{|_{Y_2}}\equiv 0$ which is not possible, since $\psi_2(y_2)=1$. Then,   $z_1\in Ker \, \phi_1$ and, analogously,    $z_2\in Ker \, \psi_2$. Then, $span\{z_1\} = Ker \, \phi_1\cap Y_1=span\{y_1\}$, and $span\{z_2\}= Ker \, \psi_2\cap Y_2=span\{x_2\}$.

Computing  $(\psi_1\otimes \phi_2 \otimes Id)(x)$ and arguing in an analogous  way,  we get  that $span\{z_1\}= Ker \, \psi_1\cap Y_1=span\{x_1\}$ and $span\{z_2\} = Ker \, \phi_2\cap Y_2=span\{y_2\}$. This is in contradiction with  the fact that
 $dim \, (span\{x_1, y_1\})= dim \, (span\{x_2, y_2\}) =2$.
In this way, we obtain that $x$ does not admit a rank 2 expression.

To prove the remaining  cases, $r>2$, we use an induction argument on the rank of the tensors.  Assume that $(\mathcal{S}^{r-1}_{Y_1,\ldots,Y_n}, d_{\alpha})$ is not complete, for any infinite dimensional Banach spaces $\spaces{Y}{1}{n}$.  Let us fix a non null simple vector  $z_0:=z_0^1\otimes\cdots \otimes z_0^n\in \Tensor{X}{1}{n}{}$. For each $j=1,\ldots,n$, let $X_j=Y_j\oplus <z_0^j>$ be a decomposition as in (iii) in Lemmma \ref{lemma: linearly indep tensors}. We use the induction hypothesis to choose a sequence $\{y_k\}\subset\mathcal{S}^{r-1}_{Y_1,\ldots,Y_n}$ such that $\{y_k\}_k$ converges in norm to some $y\in \Tensor{Y}{1}{n}{}$, with $r-1<Rank(y)<\infty$.
By (iii) in Lemma \ref{lemma: linearly indep tensors}, we have that $\{y_k+z_0\}_k\subset\mathcal{S}^{r}_{X_1,\ldots,X_n}$.  The sequence $\{y_k+z_0\}_k$  converges in norm  to $x:=y+z_0$, whose rank, again by Lemma \ref{lemma: linearly indep tensors},  is $Rank(x)=Rank(y)+1>r$.
Consequently, the metric space $(\mathcal{S}^{r}_{X_1,\ldots,X_n}, d_{\alpha})$ is not complete.
\end{proof}

Proposition \ref{prop: Sr not closed} and its proof extend to the infinite dimensional case, the results in     Theorem \cite[4.10]{DSilva Lim}. There, the authors prove  that   whenever  the dimensions, $d_j<\infty$,  of the real spaces  are such that $2\leq r\leq \min(d_1,\ldots , d_n)$, the problem of determining a best
rank-r approximation for an order-k tensor in $\mathbb{R}^{d_1\times \cdots \times d_n}$ has no solution in general.

\subsection{The Segre cone.}

 If $\spaces{X}{1}{n}$ are  Banach  spaces    and  $\alpha$, $\beta$ is a pair of  reasonable cross norms  on  $\Tensor{X}{1}{n}{}$,   Theorem \ref{thm: fixed rank  cone metrics} establishes the
    Lipschitz equivalece:   $$(\Sigma_{\spaces{X}{1}{n}},  d_{\pi}) \hspace{.2cm}  \stackrel{Id}{\simeq}  \hspace{.2cm}  (\Sigma_{\spaces{X}{1}{n}},  d_{\alpha}) $$
    with
  $ d_{\alpha}(w,z)\leq d_{\pi}(w,z)\leq  2^{n-1} d_{\alpha}(w,z)$, for every $w,z \in \Sigma_{\spaces{X}{1}{n}}$.

   \begin{proposition}\label{prop: cSegre cerrado en tensor}  Let $\spaces{X}{1}{n}$ be Banach spaces.  $(\Sigma_{\spaces{X}{1}{n}}, d_{\pi})$ is a complete metric space.
     \end{proposition}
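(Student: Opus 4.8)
The plan is to reduce the statement to the injective norm and then to show that a $d_{\epsilon}$-limit of decomposable tensors is again decomposable, recovering its factors by contracting all slots but one against a single, well chosen family of functionals.

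First, by Corollary \ref{coro: completeness equiv all tensor norm} it is enough to prove that $(\Sigma_{\spaces{X}{1}{n}},d_{\epsilon})$ is complete. So let $(z_k)_k\subset\Sigma_{\spaces{X}{1}{n}}$ be $d_{\epsilon}$-Cauchy; since $\CTensor{X}{1}{n}{\epsilon}$ is complete, $z_k\to z$ for some $z\in\CTensor{X}{1}{n}{\epsilon}$, and it remains to check that $z$ is decomposable (the case $z=0$ being trivial). Assume $z\neq 0$. As $\epsilon$ is a genuine norm on the completed space and the decomposable functionals $\phi^1\otimes\cdots\otimes\phi^n$ separate its points, we may fix $\phi^j\in X_j^*$ with $\|\phi^j\|\le 1$ and
\[
L:=(\phi^1\otimes\cdots\otimes\phi^n)(z)\neq 0 .
\]

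For each $i\in\{1,\ldots,n\}$ I would use these fixed functionals to introduce the contraction $Q_i\colon\CTensor{X}{1}{n}{\epsilon}\to X_i$, the bounded linear operator determined on decomposable tensors by $Q_i(y^1\otimes\cdots\otimes y^n)=\big(\prod_{l\neq i}\phi^l(y^l)\big)\,y^i$. Writing the injective tensor product with the $i$-th factor last and using the reasonable cross norm estimate for $\bigotimes_{l\neq i}\phi^l$, one checks $\|Q_i\|\le 1$, so $Q_i$ indeed extends to the completion; moreover $\phi^i\circ Q_i=\phi^1\otimes\cdots\otimes\phi^n$ since both sides are continuous and agree on decomposables. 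Set $v_i:=Q_i(z)\in X_i$; then $\phi^i(v_i)=L\neq 0$, so every $v_i\neq 0$. Fixing for each (large) $k$ a representation $z_k=x_k^1\otimes\cdots\otimes x_k^n$ and putting $\lambda_k^i:=\prod_{l\neq i}\phi^l(x_k^l)$, continuity of $Q_i$ gives $\lambda_k^i\,x_k^i=Q_i(z_k)\to v_i$ in $X_i$.

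It remains to reassemble the limit. Counting multiplicities,
\[
\prod_{i=1}^n\lambda_k^i=\Big(\prod_{l=1}^n\phi^l(x_k^l)\Big)^{n-1}=\big((\phi^1\otimes\cdots\otimes\phi^n)(z_k)\big)^{n-1}\longrightarrow L^{\,n-1}\neq 0 .
\]
Since the $n$-linear map $\otimes\colon\cartesian{X}{1}{n}\to\CTensor{X}{1}{n}{\epsilon}$ is bounded, hence jointly continuous, $(\lambda_k^1 x_k^1)\otimes\cdots\otimes(\lambda_k^n x_k^n)\to v_1\otimes\cdots\otimes v_n$; but the left-hand side equals $\big(\prod_i\lambda_k^i\big)z_k$, which converges to $L^{\,n-1}z$ because $z_k\to z$ is bounded and $\prod_i\lambda_k^i\to L^{\,n-1}$. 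Hence $L^{\,n-1}z=v_1\otimes\cdots\otimes v_n$, that is $z=(L^{-(n-1)}v_1)\otimes v_2\otimes\cdots\otimes v_n\in\Sigma_{\spaces{X}{1}{n}}$, as desired. The only delicate point is the boundedness of the contractions $Q_i$ on $\CTensor{X}{1}{n}{\epsilon}$ (together with the legitimacy of passing to $\epsilon$, which is exactly Corollary \ref{coro: completeness equiv all tensor norm}); everything else is a routine limiting argument. One can also argue directly for $d_{\pi}$ without this reduction, using that the injective seminorm of a $\pi$-limit of decomposables equals its $\pi$-norm, which again supplies a separating decomposable functional.
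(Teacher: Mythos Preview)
Your proof is correct and genuinely different from the paper's. The paper argues by induction on $n$ directly in $\CTensor{X}{1}{n}{\pi}$: given a convergent sequence of decomposables, it splits into two cases according to whether the factor sequences $\{x_j^k\}_j$ and $\{x_j^1\otimes\stackrel{k}{\stackrel{\vee}\cdots}\otimes x_j^n\}_j$ admit weakly null subsequences. In the non-weakly-null case it extracts norm-convergent factors via suitable functionals and invokes the induction hypothesis on the $(n-1)$-fold product; in the weakly-null case it appeals to Lewis's lemma on weak convergence in injective tensor products to force $z=0$. Your argument avoids induction, weak topologies, and the external citation altogether: the reduction to $\epsilon$ via Corollary~\ref{coro: completeness equiv all tensor norm} supplies a separating decomposable functional $\phi^1\otimes\cdots\otimes\phi^n$, and the single family of contractions $Q_i$ recovers all factors at once, with the algebraic identity $\prod_i\lambda_k^i=L_k^{\,n-1}$ gluing them back. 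This is shorter and more self-contained; the paper's route, on the other hand, does not need the preliminary metric equivalence of tensor norms on $\Sigma$, and makes the dichotomy ``nonzero limit versus weakly null factors'' explicit. Your final parenthetical remark is also right: since $\|z_k\|_\pi=\|z_k\|_\epsilon$ on decomposables, a nonzero $\pi$-limit of decomposables has nonzero image in $\CTensor{X}{1}{n}{\epsilon}$, so the same separating functional is available without the formal reduction.
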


     \begin{proof} We will use induction in $n$. The result is trivial for $n=1$, since $\Sigma_{X_1}=X_1$.  Assume that the proposition holds  for any collection of $n-1$ Banach spaces. Let $\{x_j^1\otimes\cdots \otimes  x_j^n\}_j\subset \Sigma_{\spaces{X}{1}{n}}$ be  a sequence converging to $z\in \CTensor{X}{1}{n}{\pi}$.   To prove  that $z\in \Sigma_{\spaces{X}{1}{n}}$, we will consider two cases:

     \noindent
     \underline{Case 1}: There is some $k\in\{1,\ldots, n\}$ such that the sequences $\{x_j^k\}_j\subset  X_1$ and $\{x_j^1\otimes{\stackrel{\stackrel{k}{\vee}}{\cdots}} \otimes  x_j^n\}_j\subset X_1\hat{\otimes}_\pi {\stackrel{\stackrel{k}{\vee}}{\cdots}} \hat{\otimes}_\pi X_n$ have  no weakly null subsequences.
      For simplicity in notation, we will assume $k=1$. In this case, there exists a subsequence of indexes $\{j_k\}_k$ (that we continue to call $\{j\}_j$)
         and $\phi\in X_1^*$, $\psi\in (\CTensor{X}{2}{n}{\pi})^*$ such that $\phi(x_j) \xrightarrow[]{} \alpha_{\phi}\neq 0$ and $\psi(x_j^2\otimes\cdots \otimes  x_j^n) \xrightarrow[]{} \alpha_{\psi}\neq 0$.

         Let $\Pi_{\phi}: \CTensor{X}{1}{n}{\pi}\rightarrow \CTensor{X}{2}{n}{\pi}$ be  the linear mapping determined by the relation     $\Pi_{\phi}(y^1\otimes\cdots \otimes  y^n)=\phi(y^1)y^2\otimes\cdots \otimes  y^n$. Since $\Pi_{\phi}$ is continuous, we have that $\Pi_{\phi}(x_j^1\otimes\cdots \otimes  x_j^n)\xrightarrow[]{} \Pi_{\phi}(z)$. By other hand $\Pi_{\phi}(x_j^1\otimes\cdots \otimes  x_j^n)={\phi}(x_j^1)(x_j^2\otimes\cdots \otimes  x_j^n).$ Consequently $x_j^2\otimes\cdots \otimes  x_j^n \xrightarrow[]{\|\cdot\|} \frac{1}{\alpha_{\phi}}\Pi_{\phi}(z)$. By the induction hypothesis we know that
          $ \frac{1}{\alpha_{\phi}} \Pi_{\phi}(z)\in \Sigma_{\spaces{X}{2}{n}}. $
          With an analogous argument, defining  $\Pi_{\psi}: \CTensor{X}{1}{n}{\pi}\rightarrow X_1$ as $\Pi_{\psi}(y^1\otimes\cdots \otimes  y^n)=y^1 \psi(y^2 \otimes\cdots \otimes  y^n)$, we get that
          $x_j^1 \xrightarrow[]{\|\cdot\|} \frac{1}{\alpha_{\psi}}\Pi_{\psi}(z)\in X_1$.
          Then, we have that $x_j^1\otimes\cdots \otimes  x_j^n \xrightarrow[]{\|\cdot\|} z=\frac{1}{\alpha_{\psi}\alpha_{\phi}}\Pi_{\psi}(z)\otimes\Pi_{\phi}(z)\in \Sigma_{\spaces{X}{1}{n}}.$

       \noindent
        \underline{Case 2}: One of the bounded sequences,  $\{x_j^1\}_j\subset  X_1$  or $\{x_j^2\otimes{\stackrel{\stackrel{k}{\vee}}{\cdots}} \otimes  x_j^n\}_j$, has a weakly null subsequence.
         Let us assume  that   $\{x_j^1\}_j \xrightarrow[]{w} 0$.  Then, for every pair $\phi\in X_1^*$, $\psi\in (\CTensor{X}{2}{n}{\pi})^*$, $\lim{\phi}(x_j^1)\psi(x_j^2\otimes\cdots \otimes  x_j^n)=0$. Thanks to    Lemma 1.1 \cite{Lewis},  this is enough to ensure that   $x_j^1\otimes\cdots \otimes  x_j^n \xrightarrow[]{weak} 0$ in $ X_1\hat{\otimes}_\epsilon ( X_2\hat{\otimes}_\pi {\cdots} \hat{\otimes}_\pi X_n)$. Since the natural inclusion $ X_1\hat{\otimes}_\pi ( X_2\hat{\otimes}_\pi {\cdots} \hat{\otimes}_\pi X_n)\rightarrow X_1\hat{\otimes}_\epsilon ( X_2\hat{\otimes}_\pi {\cdots} \hat{\otimes}_\pi X_n)$ is continuous,  we have that  $x_j^1\otimes\cdots \otimes  x_j^n \xrightarrow[]{\|\cdot\|} z$ in $ X_1\hat{\otimes}_\epsilon ( X_2\hat{\otimes}_\pi {\cdots} \hat{\otimes}_\pi X_n)$.  Then necessarily $z=0,$ which is in $\Sigma_{\spaces{X}{1}{n}}$.  In case the weakly null sequence is  $\{x_j^2\otimes{\cdots} \otimes  x_j^n\}_j$, we can argue in a similar way. \end{proof}

\

     By  Corollary  \ref{coro: completeness equiv all tensor norm}, $\Sigma_{\spaces{X}{1}{n}}$  is closed in   $\CTensor{X}{1}{n}{\alpha}$ for any  cross norm $\alpha$.  This, along with Theorem \ref{thm: fixed rank  cone metrics}, allows us to define {\sl the Segre cone in the Banach space category}, by choosing one specific reasonable cross norm.  In this way, we introduce:

\begin{definition}\label{def: Sigma} The Segre cone $\Sigma_{\spaces{X}{1}{n}}$ of the $n$-Banach spaces $\spaces{X}{1}{n}$, is the metric space  $(\Sigma_{\spaces{X}{1}{n}}, d_{\pi})$.
 \end{definition} Observe that  as sets, the  Segre cone just defined  coincides with the algebraic Segre cone  introduced in  (\ref{diagram}).

\

     \begin{corollary}\label{coro: segre de sub cerrado}   Let $\spaces{Y}{1}{n}$ be Banach spaces and let  $X_i \subset Y_i$ be   closed subspaces, for every $i=1,\ldots,n$. Then
        $\Sigma_{\spaces{X}{1}{n}}$ is  closed in  $\Sigma_{\spaces{Y}{1}{n}}$.
     \end{corollary}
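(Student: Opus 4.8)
The plan is to reduce the statement to two ingredients that are already available: the injectivity of the $\epsilon$-norm, and the completeness of the Segre cone established in Proposition \ref{prop: cSegre cerrado en tensor}. Since being closed is a topological property and, by Theorem \ref{thm: fixed rank  cone metrics} and Corollary \ref{coro: completeness equiv all tensor norm}, the metrics $d_\pi$ and $d_\epsilon$ induce the same topology on both $\Sigma_{\spaces{X}{1}{n}}$ and $\Sigma_{\spaces{Y}{1}{n}}$, I may compute the relevant closure using $d_\epsilon$ in place of $d_\pi$.

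First I would use that the injective tensor norm is \emph{injective} in the technical sense: for closed subspaces $X_i\subset Y_i$, the canonical map $\CTensor{X}{1}{n}{\epsilon}\to\CTensor{Y}{1}{n}{\epsilon}$ is an isometric embedding (see \cite[p.~46]{Ryan-libro}, \cite[Theorem 1.1.3]{DieFouSwa}). Restricting this embedding to decomposable tensors identifies $\Sigma_{\spaces{X}{1}{n}}$ isometrically with the subset $\{x_1\otimes\cdots\otimes x_n:\ x_i\in X_i\}$ of $\Sigma_{\spaces{Y}{1}{n}}$; in other words, the metric $d_\epsilon$ on $\Sigma_{\spaces{X}{1}{n}}$ is exactly the restriction of the metric $d_\epsilon$ of $\Sigma_{\spaces{Y}{1}{n}}$. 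This is precisely the step that dictates working with $\epsilon$ rather than with $\pi$.

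Then I would invoke Proposition \ref{prop: cSegre cerrado en tensor}, which asserts that $(\Sigma_{\spaces{X}{1}{n}}, d_\pi)$ is complete; by Corollary \ref{coro: completeness equiv all tensor norm} it is then also complete for $d_\epsilon$. Since a complete subspace of a metric space is always closed in it, applying this with ambient space $(\Sigma_{\spaces{Y}{1}{n}}, d_\epsilon)$ gives that $\Sigma_{\spaces{X}{1}{n}}$ is closed in $\Sigma_{\spaces{Y}{1}{n}}$, which is the claim.

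I do not expect a genuine obstacle here; the only point worth stating carefully is why one passes to the injective norm: $\CTensor{X}{1}{n}{\pi}$ need not embed isometrically into $\CTensor{Y}{1}{n}{\pi}$, so the intrinsic metric $d_\pi$ of $\Sigma_{\spaces{X}{1}{n}}$ is a priori only Lipschitz equivalent to, and not equal to, the restriction of the ambient $d_\pi$. For a topological conclusion this is immaterial, but working with $\epsilon$ keeps the completeness argument down to a single line.
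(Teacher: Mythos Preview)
Your proof is correct and follows essentially the same strategy as the paper: identify a reasonable cross norm on $\Tensor{X}{1}{n}{}$ for which the inclusion into the ambient tensor space is isometric, then invoke completeness of the Segre cone (Proposition~\ref{prop: cSegre cerrado en tensor} together with Theorem~\ref{thm: fixed rank  cone metrics}/Corollary~\ref{coro: completeness equiv all tensor norm}) to conclude closedness. The only cosmetic difference is the choice of norm: you use the injective norm $\epsilon$ and its injectivity, while the paper uses directly the norm $\alpha$ induced on $\Tensor{X}{1}{n}{}$ by the inclusion into $\CTensor{Y}{1}{n}{\pi}$, observing that this $\alpha$ is itself a reasonable cross norm.
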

      \begin{proof} The continuous inclusion $\Tensor{X}{1}{n}{}\subset \CTensor{Y}{1}{n}{\pi}$ induces a reasonable cross norm $\alpha$ on   $\Tensor{X}{1}{n}{}$.  By Theorem \ref{thm: fixed rank  cone metrics}  and Proposition \ref{prop: cSegre cerrado en tensor}, we have that    $(\Sigma_{\spaces{X}{1}{n}},  d_{\alpha})$ is a complete space. This proves that $\Sigma_{\spaces{X}{1}{n}}$ is  closed in $\CTensor{Y}{1}{n}{\pi}$.  Using  Proposition \ref{prop: cSegre cerrado en tensor} again, it is also closed in $\Sigma_{\spaces{Y}{1}{n}}$.
     \end{proof}

      The following result is well known  in the algebraic finite dimensional setting:
     \begin{proposition}\label{prop: rulling subspaces} Let $\spaces{X}{1}{n}$ be Banach  spaces   and let $Y\subset \Sigma_{\spaces{X}{1}{n}}$ be a subspace of  $\dim{Y} \geq 2$. Then, there exists $i_0\in\{1,\ldots,n\}$ and $x_i \in X_i, i\neq i_0$  such that $Y\subset x_1\otimes\cdots \otimes x_{i_0-1}\otimes X_{i_0}\otimes x_{i_0+1}\otimes \cdots \otimes x_{n}$.
     \end{proposition}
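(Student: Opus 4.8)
The plan is to reduce the statement to an elementary fact about sums of two decomposable tensors and then to propagate the resulting ``free'' coordinate over all of $Y$. Since $\Sigma_{\spaces{X}{1}{n}}=\mathcal{S}^1_{\spaces{X}{1}{n}}$, every element of the linear subspace $Y$ is decomposable, and a nonzero decomposable tensor $z_1\otimes\cdots\otimes z_n$ necessarily has all of its factors $z_i$ nonzero. The key lemma I would establish first is: \emph{if $u=u_1\otimes\cdots\otimes u_n$ and $v=v_1\otimes\cdots\otimes v_n$ are linearly independent nonzero elements of $Y$, then there is a unique index $j=j(u,v)$ such that $u_k,v_k$ are linearly dependent for all $k\neq j$ and $u_j,v_j$ are linearly independent.} At least one such $j$ exists, since otherwise every $v_k$ would be a scalar multiple of $u_k$ and $v$ a multiple of $u$. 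For uniqueness, suppose $u_i,v_i$ are independent and $u_j,v_j$ are independent with $i\neq j$. As $Y$ is a subspace, $u+v\in Y$ is decomposable, say $u+v=c_1\otimes\cdots\otimes c_n$ with each $c_k\neq 0$. Pick by Hahn--Banach functionals $u_i^*,v_i^*\in X_i^*$ biorthogonal to $\{u_i,v_i\}$ and $u_j^*,v_j^*\in X_j^*$ biorthogonal to $\{u_j,v_j\}$, and for $(\phi,\psi)\in X_i^*\times X_j^*$ consider the linear map that evaluates $\phi$ on the $i$-th factor and $\psi$ on the $j$-th factor, landing in $\bigotimes_{k\neq i,j}X_k$. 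Applying it with $(\phi,\psi)=(u_i^*,u_j^*)$, $(v_i^*,v_j^*)$, $(u_i^*,v_j^*)$ to the identity $u+v=c_1\otimes\cdots\otimes c_n$, and using that the surviving partial products $\bigotimes_{k\neq i,j}u_k$, $\bigotimes_{k\neq i,j}v_k$, $\bigotimes_{k\neq i,j}c_k$ are all nonzero (Lemma \ref{lemma: linearly indep tensors}(i)), one obtains respectively $u_i^*(c_i)u_j^*(c_j)\neq 0$, $v_i^*(c_i)v_j^*(c_j)\neq 0$, and $u_i^*(c_i)v_j^*(c_j)=0$; the last contradicts the first two. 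This bookkeeping with contractions is the step I expect to carry the bulk of the work, though it is routine.

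With the lemma at hand, pick any two linearly independent $u,v\in Y$ and set $i_0:=j(u,v)$. Writing $v_k=\mu_k u_k$ for $k\neq i_0$ and replacing $v$ by $\big(\prod_{k\neq i_0}\mu_k\big)^{-1}v\in Y$, I may assume $v$ agrees with $u$ in every factor except the $i_0$-th. Set $x_k:=u_k$ for $k\neq i_0$ and let $R:=\{x_1\otimes\cdots\otimes x_{i_0-1}\otimes t\otimes x_{i_0+1}\otimes\cdots\otimes x_n:\ t\in X_{i_0}\}$; then $u,v\in R$, hence $\mathrm{span}\{u,v\}\subset R$. Now take an arbitrary $w=w_1\otimes\cdots\otimes w_n\in Y$. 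If $w\in\mathrm{span}\{u,v\}$ we are done; otherwise $\{u,v,w\}$ is linearly independent, so $j(u,w)$ and $j(v,w)$ are defined. Because $u$ and $v$ coincide outside slot $i_0$, the singletons $\{k: u_k,w_k\text{ independent}\}$ and $\{k: v_k,w_k\text{ independent}\}$ can differ at most in the index $i_0$; this forces $j(u,w)=j(v,w)=:j_*$. If $j_*=i_0$, then $w_k$ is a multiple of $u_k=x_k$ for every $k\neq i_0$, so $w\in R$. If $j_*\neq i_0$, then $w_{i_0}$ is a multiple of $u_{i_0}$ (since $i_0\neq j(u,w)$) and also of $v_{i_0}$ (since $i_0\neq j(v,w)$), which forces $u_{i_0},v_{i_0}$ to be linearly dependent, contradicting $i_0=j(u,v)$. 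Hence $w\in R$ in all cases, i.e. $Y\subset R$, which is exactly the assertion.

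Finally I would point out that $\dim Y\geq 2$ is used only to produce the initial independent pair, and that the entire argument is linear algebra combined with Hahn--Banach; it uses nothing about the tensor metric or completeness, so it carries over verbatim to infinite-dimensional Banach spaces, thereby extending the classical finite-dimensional statement.
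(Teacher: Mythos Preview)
Your proof is correct and takes a genuinely different route from the paper's.

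The paper proceeds by induction on $n$: the base case $n=2$ is handled by showing that if $w_1\otimes w_2+z_1\otimes z_2$ is decomposable then $\langle w_1\rangle=\langle z_1\rangle$ or $\langle w_2\rangle=\langle z_2\rangle$ (via annihilators and Hahn--Banach), and the inductive step applies a contraction $\phi\otimes\mathrm{Id}$ to project $Y$ into $\Sigma_{X_2,\ldots,X_n}$, invokes the inductive hypothesis there, and then reduces to (a slight variant of) the $n=2$ case to finish. Your argument avoids induction altogether: you prove the key lemma directly for general $n$ using the same contraction idea (evaluating two factors simultaneously with biorthogonal functionals), and then handle the propagation to an arbitrary $w\in Y$ by the elegant observation that, since $u$ and $v$ coincide outside slot $i_0$, the singletons $\{j(u,w)\}$ and $\{j(v,w)\}$ can differ only at $i_0$ and hence must be equal. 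This replaces the paper's inductive bookkeeping with a short combinatorial step, and makes explicit why the ``free'' coordinate is independent of the chosen third vector. Both arguments rest on the same contraction trick at heart; yours is a bit more self-contained and transparent, while the paper's induction has the virtue of reducing everything to the familiar $n=2$ picture.
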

     \begin{proof} It will be proved  by induction in $n$. The  proof of the case  $n=2$ in  \cite[Theorem 9.22]{Harris} can be easily adapted to the Banach space setting. For the sake of completeness, we include it. Observe that whenever $w_1\otimes w_2+z_1\otimes z_2\in \Sigma_{{X_1},{X_2}}$, then necessarily
     $<w_1>=<z_1>$ or $<w_2>=<z_2>$  ($<\cdot>$ denotes the span).  To see this, assume that $w_1\otimes w_2+z_1\otimes z_2= x_1\otimes x_2$. In this case, given $x^*\in <x_1>^{\perp}$  we have that $x^*(w_1)w_2=-x^*(z_1)z_2$. If $<w_2>=<z_2>$, the proof is done. In the other case, necessarily   $w_1, z_1 \in Ker \, x^*$.  This can be done with  any $x^*\in <x_1>^{\perp}$. Thus, $\cap_{x*\in <x_1>^{\perp} }Ker \, x^*=<x_1>=<w_1>=<z_1>$.

     Now, assume  we are in the case   $<w_1>=<z_1>$, and $\{w_2,z_2\}$ l.i. and consider an arbitrary  $y_1\otimes y_2\in Y$.  Arguing  as before, we  obtain that $<y_1>=<w_1>$. Consequently, $Y=w_1\otimes Y_2$. Analogously, when   $<w_2>=<z_2>$ we have that $Y=Y_1\otimes w _2$.

       Note that this result also holds  if it is  the case that the subspace  $Y\subset  \Sigma_{\spaces{X}{1}{n}}$  is contained in
     $ X_1\otimes w_2\otimes\cdots\otimes w_{n-1}\otimes  X_n$, for some  $w_i\in X_i$, $i=2,\ldots,n-1$.

       Now, let us assume that the result holds for any $n-1$ Banach spaces.  For a fixed not zero $x_1\in X_1$, let  $\phi$ be a projection onto the one dimensonal space $<x_1>\subset X_1$.  Consider the following bounded linear operators, where  $\phi(x)=\lambda(\phi)x_1$ for some $\lambda \in X_1^*$ and $\Psi$ is an isomorphims:
        \[ \begin{array}{ccccc}
        \CTensor{X}{1}{n}{\pi}&\stackrel{\phi\otimes Id}{\longrightarrow}&<x_1>\otimes\CTensor{X}{2}{n}{\pi}&\stackrel{\Psi }{\longrightarrow}&\CTensor{X}{2}{n}{\pi} \\
        x\otimes{x_2}\otimes\cdots\otimes x_n& \mapsto & \lambda(\phi)x_1\otimes{x_2}\otimes\cdots\otimes x_n & \mapsto & \lambda(\phi){x_2}\otimes\cdots\otimes x_n.
        \end{array}
        \]
       Let  $Y\subset \Sigma_{\spaces{X}{1}{n}}$ be a subspace.  Then  $\Psi\circ (\phi\otimes Id)(Y)$ is a subspace, which  is contained in $ \Sigma_{\spaces{X}{2}{n}}$.  By the induction hypothesis, we know that  there exist vectors $w_i, \,i=2,\ldots n-1$ and a subspace $Y_n\subset X_n$ such that
        $\Psi(\phi\otimes Id)(Y)=w_2\otimes\cdots\otimes w_{n-1}\otimes  Y_n$ (we have taken  the index $n$ for simplicity, but it could be any  between $2$ and $n$).  Then, $Y\subset X_1\otimes w_2\otimes\cdots\otimes w_{n-1}\otimes  Y_n.$  We  use now the  equivalent formulation of the  already proved  case $n=2$, to conclude that  $Y\subset w_1\otimes w_2\otimes\cdots\otimes w_{n-1}\otimes  Y_n$ or  $Y\subset X_1\otimes w_2\otimes\cdots\otimes w_{n-1}\otimes  w_n$ for some $w_1\in X_1$ or $w_n\in X_n$.
     \end{proof}

\section{$\Sigma$-operators.}\label{sect: morphisms}

 \subsection{Bounded multilinear mappings  as  Lipschitz mappings on the Segre cone}

 Having defined the Segre cone of $n$-Banach spaces, $\Sigma_{\spaces{X}{1}{n}}$, we are now able to define  $\Sigma$-operators. They are introduced to be used  as  a  geometric tool to study multilinear mappings in the Banach space category,  when applied through
  a scheme of factorization as in (\ref{diagram}) .  To this effect  we
 prove  in Theorem \ref{thm: continuity diagram} how the mappings involved in diagram (\ref{diagram}) behave with respect to the product of  norms, tensor metrics and
 reasonable cross norms, respectively.

   To fix notation, if    $T:\cartesian{X}{1}{n}\rightarrow Y$ is    a  multilinear mapping between vector spaces, we let
        $\hat{T} \in  {L}(\Tensor{X}{1}{n}{};  Y)$ be  the    unique  linear mapping satisfying  that  for every $x_i\in X_i \; i=1,\ldots, n $, \;
    $T(\spaces{x}{1}{n})=\hat{T}(\Tensor{x}{1}{n}{})$.

       \begin{definition}\label{def: sigma-oper} Given $n+1$ vector spaces $\spaces{X}{1}{n}, Y$, we will say that a mapping $f:\Sigma_{\spaces{X}{1}{n}}\rightarrow Y $ is a {\boldmath $\Sigma$}{\bf-operator} if there exist a multilinear mapping $T\in \mathcal{L}(\cartesian{X}{1}{n},Y)$ such that $f=\hat{T}_{|_{\Sigma_{\spaces{X}{1}{n}}}}$.
    We will denote:
     $${L}(\Sigma_{\spaces{X}{1}{n}}; Y)=\{f:\Sigma_{\spaces{X}{1}{n}}\rightarrow Y; \; f \, \mbox{is a}\, \Sigma\mbox{-operator}\} $$
 \end{definition}

  Diagram (\ref{diagram}) in the Introduction,   comprises  the relations between a multilinear mapping $T$,  its associated $\Sigma$-operator,  $f_T$,  and its associated  linear mapping $\hat{T}$.

  When $n=1$, the Segre cone is a Banach space,  $\Sigma_{X_1}=X_1$,  and  $T=f_T=\hat{T}$ are all linear.

If the spaces  have, in addition,  Banach space structures,  these mappings are simultaneously continuous, accordingly to  the following theorem:

\begin{theorem}\label{thm: continuity diagram}  Let  $\spaces{X}{1}{n}$  and $Y$  be Banach  spaces,  and let $T:\cartesian{X}{1}{n}\rightarrow Y$ be a multilinear mapping.  If we set $f_{T}:=\hat{T}_{|_{\Sigma_{\spaces{X}{1}{n}}}}$, then the following statements are equivalent:
  \item[i)] $T$ is bounded.
  \item[ii)]  $\hat{T} \in  \mathcal{L}(\CTensor{X}{1}{n}{\pi};  Y)$ is bounded.
   \item[iii)] $f_{T}:(\Sigma_{{\spaces{X}{1}{n}}},d_{\pi}) \rightarrow Y  $ is Lipschitz.

   In this case, $\|T\|=\|\hat{T}\|= \|f_{T}\|_{{Lip}}$.
\end{theorem}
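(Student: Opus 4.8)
The plan is to prove the cycle of implications $i)\Rightarrow ii)\Rightarrow iii)\Rightarrow i)$, keeping track of the norm estimates at each arrow so that the final equality of norms falls out. The equivalence $i)\Leftrightarrow ii)$ is the classical universal property of the projective tensor norm: a multilinear $T$ is bounded if and only if the linearization $\hat T$ is bounded on $\CTensor{X}{1}{n}{\pi}$, with $\|T\|=\|\hat T\|$. I would recall this from the definition of $\pi$ as the infimum of $\sum\|x_i^1\|\cdots\|x_i^n\|$ over representations; one direction is that $\|\hat T(z)\|\le\|T\|\,\pi(z)$ by taking the infimum over representations, and the other is immediate by restricting $\hat T$ to decomposable tensors, where $\pi(x_1\otimes\cdots\otimes x_n)=\|x_1\|\cdots\|x_n\|$.

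For $ii)\Rightarrow iii)$, the point is that $f_T=\hat T_{|\Sigma}$ is the restriction of a bounded \emph{linear} map to a subset of the Banach space $\CTensor{X}{1}{n}{\pi}$, and any bounded linear map is Lipschitz on any subset with Lipschitz constant at most its operator norm; hence $\|f_T\|_{Lip}\le\|\hat T\|$. For $iii)\Rightarrow i)$, I would use that $\otimes:\cartesian{X}{1}{n}\to(\Sigma_{\spaces{X}{1}{n}},d_\pi)$ behaves well with respect to the product of the norms: for decomposable tensors one has $d_\pi(x_1\otimes\cdots\otimes x_n,\,0)=\pi(x_1\otimes\cdots\otimes x_n)=\|x_1\|\cdots\|x_n\|$, so
\[
\|T(x_1,\ldots,x_n)\|=\|f_T(x_1\otimes\cdots\otimes x_n)-f_T(0)\|\le\|f_T\|_{Lip}\,d_\pi(x_1\otimes\cdots\otimes x_n,0)=\|f_T\|_{Lip}\,\|x_1\|\cdots\|x_n\|,
\]
which gives $\|T\|\le\|f_T\|_{Lip}$ (note $0=0\otimes\cdots\otimes 0\in\Sigma$ and $f_T(0)=0$). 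Chaining the three inequalities $\|T\|\le\|f_T\|_{Lip}\le\|\hat T\|=\|T\|$ forces all three to be equal.

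I do not expect a serious obstacle here; the only delicate point is the estimate used in $iii)\Rightarrow i)$, namely that the distance in $(\Sigma,d_\pi)$ between a decomposable tensor and the origin is exactly the product of the norms of its factors rather than something smaller. This is why the projective norm is the right choice of reasonable cross norm in which to phrase the theorem: for a general reasonable cross norm $\alpha$ one only has $\alpha(x_1\otimes\cdots\otimes x_n)=\|x_1\|\cdots\|x_n\|$ as well (since inequalities (1) and (2) are equalities for any reasonable cross norm), so in fact the same argument works verbatim for any $\alpha$ and the Lipschitz constants are comparable by Theorem \ref{thm: fixed rank cone metrics} applied with $r=1$; but stating it for $\pi$ keeps the constants clean and makes $ii)$ literally the statement of the universal property. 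A secondary check is that $f_T$ is genuinely well defined on $\Sigma$, i.e. that $\hat T$ restricted to decomposable tensors depends only on the tensor and not on the chosen factorization — this is part of the definition of $\hat T$ and of a $\Sigma$-operator (Definition \ref{def: sigma-oper}), so nothing new is needed.
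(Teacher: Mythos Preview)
Your proof is correct and follows essentially the same route as the paper: the equivalence $i)\Leftrightarrow ii)$ is invoked as the universal property of $\pi$, the implication $ii)\Rightarrow iii)$ is the observation that the restriction of a bounded linear map is Lipschitz with $\|f_T\|_{Lip}\le\|\hat T\|$, and $iii)\Rightarrow i)$ uses $d_\pi(x_1\otimes\cdots\otimes x_n,0)=\|x_1\|\cdots\|x_n\|$ together with $f_T(0)=0$ to get $\|T\|\le\|f_T\|_{Lip}$, after which the norm equalities follow. One small caution on your side remark: while $iii)\Rightarrow i)$ and $ii)\Rightarrow iii)$ indeed go through for any reasonable cross norm $\alpha$, the implication $i)\Rightarrow ii)$ does \emph{not}, so ``the same argument works verbatim for any $\alpha$'' is too strong --- the paper's Proposition~\ref{prop: corollary of main theorem} shows the full equivalence characterizes $\pi$ up to isomorphism.
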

\begin{proof} It is convenient to  recall that boundedness and continuity are equivalent notions for a linear or multilinear mapping (see \cite{DeFlo}, Chapter I).  Thus, the  equivalence between i) and ii) and the fact that $\|T\|=\|\hat{T}\|$ are,   essentially,    the universal property of the projective tensor product of Banach spaces (see, f.i. \cite[Theorem 1.1.8]{DieFouSwa}).  To prove the equivalence with iii), consider    $w=\Tensor{w}{1}{n}{}, z=\Tensor{z}{1}{n}{} \in \Sigma_{X_1,\ldots, X_n}$.
Then,  $\|f_{T}(w)- f_{T}(z)\|=\|\hat{T}(\Tensor{w}{1}{n}{})-\hat{T}(\Tensor{z}{1}{n}{})\|\leq \|\hat{T}\| \|w-z\|=\|\hat{T}\| d_{\pi}(w, z)$. This proves that ii) implies iii) and that $\|f_{T}\|_{{Lip}}\leq \|\hat{T}\|$.
 Finally, for an arbitrary $(w_1,\ldots,w_n)\in \cartesian{X}{1}{n}$, $\|T(w_1,\ldots,w_n)\|=\|f_{T}(w)-f_{T}(0)\| \leq  \|f_{T}\|_{{Lip}} \|w\|= \|f_{T}\|_{{Lip}} \|w_1\|\ldots\|w_n\| $, where $w=w_1\otimes\ldots \otimes w_n$. This proves that iii)  implies i) and that
 $\|T\|\leq \|f_{T}\|_{{Lip}}$.
\end{proof}

The subspace  of ${L}(\Sigma_{\spaces{X}{1}{n}}; Y)$  consisting of Lipschitz  $\Sigma$-operators will be denoted as $\mathcal{L}(\Sigma_{\spaces{X}{1}{n}}; Y)$. From Theorem \ref{thm: continuity diagram}, we have:

  \begin{proposition}\label{prop: Sigma morph}   $(\mathcal{L}(\Sigma_{\spaces{X}{1}{n}}; Y), \|\cdot \|_{Lip})$ is a Banach space isometrically isomorphic to both,  $\mathcal{L}(\spaces{X}{1}{n}; Y)$ and  $\mathcal{L}(\CTensor{X}{1}{n}{\pi}; Y)$.
   \end{proposition}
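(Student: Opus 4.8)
The plan is to assemble the statement from pieces already in hand, so the argument is mostly a matter of organizing three facts: Theorem~\ref{thm: continuity diagram}, the classical universal property of the projective tensor product, and the standard fact that the space of bounded linear (resp.\ multilinear) operators into a Banach space is itself a Banach space. First I would observe that, by definition, $\mathcal{L}(\Sigma_{\spaces{X}{1}{n}}; Y)$ is precisely the set of $\Sigma$-operators $f_T$ (for $T$ multilinear) that happen to be Lipschitz on $(\Sigma_{\spaces{X}{1}{n}}, d_\pi)$. Theorem~\ref{thm: continuity diagram} says that for a given multilinear $T$, the three conditions ``$T$ bounded'', ``$\hat T$ bounded'', ``$f_T$ Lipschitz'' are equivalent, and moreover the three numbers $\|T\|$, $\|\hat T\|$, $\|f_T\|_{Lip}$ agree. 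Hence the assignments
\[
T \longmapsto f_T, \qquad \hat T \longmapsto \hat T_{|_{\Sigma_{\spaces{X}{1}{n}}}}
\]
set up bijections between the three sets $\mathcal{L}(\spaces{X}{1}{n}; Y)$, $\mathcal{L}(\CTensor{X}{1}{n}{\pi}; Y)$ and $\mathcal{L}(\Sigma_{\spaces{X}{1}{n}}; Y)$ which preserve the respective norms; I should also note these maps are linear in $T$, which is immediate since $T\mapsto\hat T$ is linear and restriction is linear. So the maps are isometric isomorphisms of normed vector spaces, and it only remains to check completeness of one of the three.

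For completeness I would argue directly on $\mathcal{L}(\Sigma_{\spaces{X}{1}{n}}; Y)$, or — more economically — transport the question through the isometry to $\mathcal{L}(\CTensor{X}{1}{n}{\pi}; Y)$, which is a Banach space because it is the space of bounded linear operators from a Banach space into a Banach space, a standard fact. Since an isometric isomorphism of normed spaces carries Cauchy sequences to Cauchy sequences and preserves limits, completeness of $\mathcal{L}(\CTensor{X}{1}{n}{\pi}; Y)$ immediately yields completeness of $\mathcal{L}(\Sigma_{\spaces{X}{1}{n}}; Y)$ and of $\mathcal{L}(\spaces{X}{1}{n}; Y)$. Alternatively, one can give the direct argument: a $\|\cdot\|_{Lip}$-Cauchy sequence $(f_{T_k})_k$ corresponds to a $\|\cdot\|$-Cauchy sequence $(T_k)_k$ of multilinear maps, which converges uniformly on the unit ball of $\cartesian{X}{1}{n}$ to a multilinear map $T$ with $\|T_k - T\|\to 0$; then $f_{T_k}\to f_T$ in $\|\cdot\|_{Lip}$ by Theorem~\ref{thm: continuity diagram} applied to $T_k - T$.

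There is no real obstacle here; the only point requiring a word of care is that $\mathcal{L}(\Sigma_{\spaces{X}{1}{n}}; Y)$ should be recognized as a \emph{linear} subspace of ${L}(\Sigma_{\spaces{X}{1}{n}}; Y)$ on which $\|\cdot\|_{Lip}$ is genuinely a norm — in particular that $\|f_T\|_{Lip}=0$ forces $T=0$, which follows from the equality $\|T\|=\|f_T\|_{Lip}$ in Theorem~\ref{thm: continuity diagram}. Everything else is bookkeeping: verify linearity of the correspondence, invoke Theorem~\ref{thm: continuity diagram} for the isometry statement, and invoke completeness of $\mathcal{L}(\CTensor{X}{1}{n}{\pi}; Y)$ (equivalently of $\mathcal{L}(\spaces{X}{1}{n}; Y)$) together with transport of structure along the isometry for the Banach space conclusion.
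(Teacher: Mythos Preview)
Your proposal is correct and matches the paper's approach exactly: the paper simply states that the proposition follows ``From Theorem~\ref{thm: continuity diagram}'' and gives no further argument, so what you have written is precisely the unpacking the paper leaves to the reader. The only content is the norm equality $\|T\|=\|\hat T\|=\|f_T\|_{Lip}$ from Theorem~\ref{thm: continuity diagram} together with the standard completeness of $\mathcal{L}(\CTensor{X}{1}{n}{\pi};Y)$, and you have identified both.
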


  The projective tensor norm is the only cross norm (up to isomorphisms) for which  the equivalence between $(ii)$ and $(iii)$ in Theorem \ref{thm: continuity diagram} holds:

\begin{proposition}\label{prop: corollary of main theorem}  Let  $\spaces{X}{1}{n}$  be Banach  spaces and let $\alpha$ be a cross norm on $\Tensor{X}{1}{n}{}$ such that any linear mapping $\hat{T}:\CTensor{X}{1}{n}{\alpha}\rightarrow Y$  is continuous if and only if its associated $\Sigma$-operator,  $f_{T}:(\Sigma_{{\spaces{X}{1}{n}}},d_{\alpha}) \rightarrow Y,   $ is Lipschitz. Then, necessarily  $\CTensor{X}{1}{n}{\alpha}\simeq \CTensor{X}{1}{n}{\pi}$.
\end{proposition}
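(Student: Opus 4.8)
The plan is to show that the hypothesis forces the identity map $\mathrm{Id}:\CTensor{X}{1}{n}{\pi}\to\CTensor{X}{1}{n}{\alpha}$ to be an isomorphism, equivalently that there is a constant $C>0$ with $\|z\|_\pi\le C\|z\|_\alpha$ for every $z\in\Tensor{X}{1}{n}{}$ (the reverse inequality $\|z\|_\alpha\le\|z\|_\pi$ being automatic for any reasonable cross norm). First I would record the easy direction of the hypothesis: since $\alpha\le\pi$ pointwise, the inclusion $\iota:\CTensor{X}{1}{n}{\pi}\to\CTensor{X}{1}{n}{\alpha}$ is always bounded; by Theorem \ref{thm: continuity diagram} its restriction (which is nothing but the canonical $\otimes$-map viewed into the $\alpha$-completion) always has Lipschitz associated $\Sigma$-operator. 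So one implication of the biconditional in the hypothesis is vacuously available; the content is in the other implication, which says: \emph{every} Lipschitz $\Sigma$-operator $f:(\Sigma_{\spaces{X}{1}{n}},d_\alpha)\to Y$ comes from a bounded linear map on $\CTensor{X}{1}{n}{\alpha}$.

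The key step is to feed a universal test object into this hypothesis. Take $Y:=\CTensor{X}{1}{n}{\pi}$ and let $T:\cartesian{X}{1}{n}\to Y$ be the canonical multilinear map $T(x_1,\dots,x_n)=\Tensor{x}{1}{n}{}$; then $\hat T=\mathrm{Id}$ on $\Tensor{X}{1}{n}{}$ and $f_T=i$, the inclusion $\Sigma_{\spaces{X}{1}{n}}\hookrightarrow\CTensor{X}{1}{n}{\pi}$. By Theorem \ref{thm: continuity diagram} (applied with the $\pi$-norm on the domain), or directly from the cross-norm property $\pi(\Tensor{x}{1}{n}{})=\prod\|x_j\|$, the map $T$ is bounded; and by Theorem \ref{thm: fixed rank cone metrics} restricted to rank $\le 1$ tensors we have $d_\pi(w,z)\le 2^{n-1}d_\alpha(w,z)$ on $\Sigma_{\spaces{X}{1}{n}}$, so $f_T=i:(\Sigma_{\spaces{X}{1}{n}},d_\alpha)\to\CTensor{X}{1}{n}{\pi}$ is Lipschitz with constant at most $2^{n-1}$. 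Now invoke the hypothesis in the nontrivial direction: since $f_T$ is Lipschitz on $(\Sigma_{\spaces{X}{1}{n}},d_\alpha)$, the associated linear map $\hat T=\mathrm{Id}:\CTensor{X}{1}{n}{\alpha}\to\CTensor{X}{1}{n}{\pi}$ must be continuous. This yields a constant $C>0$ with $\|z\|_\pi\le C\|z\|_\alpha$ for all $z\in\Tensor{X}{1}{n}{}$, hence (the other inequality being free) $\CTensor{X}{1}{n}{\alpha}\simeq\CTensor{X}{1}{n}{\pi}$ via the identity.

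There is one subtlety to address carefully, and I expect it to be the main (minor) obstacle: the hypothesis as stated quantifies over linear maps $\hat T:\CTensor{X}{1}{n}{\alpha}\to Y$ and their associated $\Sigma$-operators, so to apply it I must make sure the inclusion $i:\Sigma_{\spaces{X}{1}{n}}\hookrightarrow\CTensor{X}{1}{n}{\pi}$ really is ``the $\Sigma$-operator associated to a linear map $\CTensor{X}{1}{n}{\alpha}\to\CTensor{X}{1}{n}{\pi}$'' in the sense intended — i.e. that it arises from a multilinear $T$ with $\hat T$ defined (a priori only algebraically) on $\Tensor{X}{1}{n}{}$, and that the statement's biconditional is about the continuity of this algebraic $\hat T$ for the $\alpha$-norm. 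This is exactly the setup of Theorem \ref{thm: continuity diagram} with $\pi$ replaced by $\alpha$ in items (ii)–(iii), and the canonical $T$ above is manifestly such a map, with $\hat T=\mathrm{Id}_{\Tensor{X}{1}{n}{}}$. Once this bookkeeping is in place, the argument is a one-line application of the hypothesis; no approximation or density argument beyond the standard extension of a bounded linear map from $\Tensor{X}{1}{n}{}$ to its completion is needed, since continuity of $\hat T$ on the dense subspace $\Tensor{X}{1}{n}{}$ already gives the norm comparison on all decomposable — indeed all — tensors.
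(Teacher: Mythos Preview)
Your proof is correct and follows essentially the same approach as the paper: apply the hypothesis to the canonical multilinear map $T(\spaces{x}{1}{n})=\Tensor{x}{1}{n}{}$ into $\CTensor{X}{1}{n}{\pi}$, use Theorem~\ref{thm: fixed rank  cone metrics} (with $r=1$) to see that $f_T$ is Lipschitz on $(\Sigma_{\spaces{X}{1}{n}},d_\alpha)$, and conclude that $\hat T=\mathrm{Id}$ is bounded from the $\alpha$-completion to the $\pi$-completion. Your constant $2^{n-1}$ is in fact the one given by Theorem~\ref{thm: fixed rank  cone metrics}; the paper's proof writes $2^n$ at this step, but the argument is otherwise identical.
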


\begin{proof} Assume that whenever $f_{T}$ is $d_{\alpha}$-continuous, then $\hat{T}$ is $\otimes_{\alpha}$-continuous. Let us consider the identity mapping on $\Tensor{X}{1}{n}{\pi}$. By Theorem \ref{thm: continuity diagram}, $f_{Id}:(\Sigma_{{\spaces{X}{1}{n}}},d_{\pi}) \rightarrow \Tensor{X}{1}{n}{\pi}  $ is Lipschitz and $\|f_{Id}\|_{{Lip}}=1$. If we use  Theorem \ref{thm: fixed rank  cone metrics} with the norms $\pi$ and $\alpha$, we  have that $d_{\pi}(w,z)\leq 2^n d_{\alpha}(w,z)$ for every $w, z \in \Sigma_{{\spaces{X}{1}{n}}}.$  Consequently $f_{Id}:(\Sigma_{{\spaces{X}{1}{n}}},d_{\alpha}) \rightarrow \Tensor{X}{1}{n}{\pi}  $ is $2^n$-Lipschitz.  Now, if we use the   hypothesis, we have that $\hat{Id}: \CTensor{X}{1}{n}{\alpha}\rightarrow\CTensor{X}{1}{n}{\pi}$  is continuous.  This already implies that   $\CTensor{X}{1}{n}{\alpha}\simeq \CTensor{X}{1}{n}{\pi}$. By Theorem \ref{thm: continuity diagram}, the reciprocal statement  also holds, .
\end{proof}

 In the case $\CTensor{X}{1}{n}{\alpha}\simeq \CTensor{X}{1}{n}{\pi}$,  the norms  $\|T\|, \|\hat{T}\|$ and $ \|f_{T}\|_{{Lip}}$ computed with $\alpha$,  being all finite, are not necessarily equal.

\begin{remark} \label{rmk:  frase a resaltar }  Despite the fact that the topology on $\Sigma_{{\spaces{X}{1}{n}}}$ induced by any  cross norm is unique, the weak topologies induced on it by $(\CTensor{X}{1}{n}{\alpha})^*$ are, in general, different.
To illustrate this, consider the sequence
$(e_i\otimes f_j)_{i,j}\subset \ell_2 \otimes \ell_2$, where  $(e_i)_i$ and $(f_j)_j$ are orthonortmal basis of $\ell_2$, respectively.  Since   $  (e_i\otimes f_j )_{i,j}$ is an orthonormal basis of $H_1\hat{\otimes}_H H_2$ (\cite[II.4 Proposition 1]{Reed Simon}), it  converges weakly to zero in $\ell_2\hat{\otimes}_{H}\ell_2$. However,  $(e_i\otimes f_i)_{i}$  is equivalent to the canonical basis of $\ell_1$ in  $\ell_2\hat{\otimes}_{\pi}\ell_2$ (see \cite[Corollary 5.14]{Holub}), which implies that it has no weakly convergent subsequences.

This fact  should be   be taken into account when defining ideals of multilinear operators:
As we will see, the notion of  $p$-summability  studied  in Definiton \ref{def: p-summ}  involves a  dual space, namely  $\mathcal{L}\left(\Sigma_{\spaces{X}{1}{n}}\right)$. In \cite{A FU} it is shown that, indeed, $p$-summability  depends on the  chosen tensor norm.  In what follows,  we  deal only with the projective norm.

\end{remark}

  \subsection*{From $\Sigma$-operators to   multilinear mappings: the  $p$-summability example.  }\label{sec: application}
    Based on Theorem \ref{thm: continuity diagram} we propose,  as a method to  generalize aspects  from the theory of bounded linear operators to the theory of multilinear  operators,  to do the following two steps:

    The first step is   to   replace boundedness conditions
on linear operators $\{S : X\rightarrow  Y \}$ by the analogous Lipschitz conditions on $\Sigma$-operators $\{f : \Sigma_{\spaces{X}{1}{n}} \rightarrow  Y \}$. The second step consists in formulate the results  obtained for $\Sigma$-operators, in multilinear terms, using the basic relation among them: $T(\spaces{x}{1}{n})=f_T(\Tensor{x}{1}{n}{}).$

   As an application of this method, we present the case of $p$-summability conditions, developed  in \cite{A FU}.    Further ideals are studied  in \cite{Samuel Thesis} with this approach.

   The notion of $p$-summing linear opeators   developed by  A. Pietsch  in  \cite{PietschSM67} (see \cite{DJT}),  has been generalized to different  settings, as is the case of  $p$-sumability in operator spaces \cite{Pisier Non-comm p-sum}, or   $p$-sumability in Lipschitz mappings \cite{FarmerJohnson}.
    In the case of    multilinear mappings, many  approaches to  $p$-summability   have  been appeared in the literature. A list of references on the subject may be found in \cite{A FU} and \cite{Calis Pelle}.

In this case, the first step of the general strategy we have just introduced  to deal with multilinear mappings is   the  following:

\begin{definition}\label{def: p-summ} \cite{A FU}
\label{defsumantes} Let  $X_1,\ldots, X_n, Y$ be Banach spaces
and $1\leq p <\infty$. A $\Sigma$-operator
$f\in\mathcal{L}(\Sigma_{X_1,\ldots,X_m}; Y)$ is said to be {\sl $p$-summing} if there is a
constant $c\geq 0$ such that for every  every $i=1,\ldots,k$
and  every  $u_i, v_i \in \Sigma_{\spaces{X}{1}{n}}$, the following inequality holds:
  \begin{equation*} \label{defsumantes.1}
  {\left(\sum_{i=1}^k \left\|{f}(u_i)-{f}(v_i)\right\|^p \right)^{1/p}  }
  \leq    c\cdot \sup \left\lbrace \left(\sum_{i=1}^k \left|\varphi(u_i)-\varphi(v_i)\right|^p\right)^{1/p}; \; \varphi\in B_{\mathcal{L}\left(\Sigma_{\spaces{X}{1}{n}}\right)}\right\rbrace
 \end{equation*}
 \end{definition}

 Pietsch' s original definition of absolutely $p$-summing  linear operators,  has every  $v_i=0$, \cite[Chapter 2]{DJT}.  Being  $f$ and $\varphi$   linear (thus,    $f(u_i)-f(v_i)=f(u_i-v_i)$ and $\varphi(u_i)-\varphi(v_i)=\varphi(u_i-v_i)$), note that  Definition \ref{def: p-summ} for the case    $n=1$, coincides with   Pietsch' s definition.

 \

   The main characteristic  features of absolutely  $p$-summing  operators remain true in the case of  $\Sigma$-operators. Among them,  we stressed  a  {\it Domination Theorem} and a  {\it Factorization Theorem}, see \cite{A FU}.

\subsubsection*{$p$-summability for multilinear operators}

 Returning to our original motivation, namely to develop a strategy to generalize linear ideals to the multilineal context, we obtain the following characterizations, directly  from the already mentioned results on $\Sigma$-operators:

\begin{theorem}\label{thm: equiv p summ multi op} Let $1 \leq p < \infty$ and  let $T:X_1\times\cdots\times X_n\rightarrow Y$ be an $n$-linear operator between Banach spaces. The following conditions for $T$ are equivalent:
\begin{enumerate}
\item There exists $c>0$ such that for $k\in\mathbb{N}$, $i=1,\ldots,k$ and $u_i, v_i \in \cartesian{X}{1}{n}$,
  \begin{equation*}
  {\left(\sum_{i=1}^k \left\|{T}\left(u_i\right)-T \left(v_i\right)\right\|^p \right)^{1/p}  }
  \leq    c\cdot \sup \left\lbrace \left(\sum_{i=1}^k \left|{{\varphi}}\left(u_i\right)-{\varphi}\left(v_i\right)\right|^p\right)^{1/p}; \; \varphi \in B_{\mathcal{L}\left(\cartesian{X}{1}{n}\right)}\right\rbrace
 \end{equation*}
 \item[]
\item There is a constant $c\geq0$ and a regular probability measure $\mu$ on $\left(B_{\mathcal{L}\left(X_1,\ldots,X_n\right)},w^*\right)$ such that for each $u=(u_1,\ldots,u_n), v=(v_1,\dots, v_n) \in \cartesian{X}{1}{n} $ we have that
\begin{equation*} \label{eqn: dominacion}
{\left\|T\left(u\right)-T\left(v\right)\right\| }
 \leq  c\cdot\left(\int_{B_{\mathcal{L}\left(X_1,\ldots,X_n\right)}} \hspace{-1.7cm}\left|\varphi\left(u\right)-\varphi\left(v\right)\right|^p\,d\mu(\varphi)\right)_{\ .}^{1/p}
\end{equation*}
\item[]

\item There exist  a  probability space $(\Omega,\Sigma,\mu)$, a  multilinear operator
operator $\nu: X_1\times\ldots \times X_n\rightarrow L_{\infty}(\mu)$ and   a Lipschitz
function ${\tilde{h_f}}: {L_p\left(\mu\right)}\rightarrow{\ell_{\infty}^{B_{Y^*}}}$
such that the following diagram commutes:
     \[
\begin{tikzcd}\label{p-summ multi  linfty}
        X_1\times\ldots \times X_n \arrow{r}{T}\arrow{dd}[swap]{\nu} &  Y \arrow{rd}{i_Y} & \\
        & & \ell_{\infty}^{B_{Y^*}} \\
        L_{\infty}(\mu)\arrow{r}{{i_p}}   &    L_p\left(\mu\right)\arrow{ru}{\tilde{h_f}} & \\
 \end{tikzcd}
\]

%  \arrow[phantom,"\cap"]{d} Pone una flecha fantasma, es decir, invisible, y en su lugar pone lo que esta entre las comillas.

\end{enumerate}

\end{theorem}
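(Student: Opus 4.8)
The strategy is to transfer everything through the isometric correspondence of Proposition \ref{prop: Sigma morph} (and diagram (\ref{diagram})), so that the three conditions for $T$ become the three conditions for its associated $\Sigma$-operator $f_T$, and then invoke the results on $p$-summing $\Sigma$-operators announced after Definition \ref{def: p-summ} (the Domination and Factorization Theorems of \cite{A FU}). The key observation, to be stated first, is that under $T(x_1,\ldots,x_n)=f_T(x_1\otimes\cdots\otimes x_n)$ one has a canonical identification $\mathcal{L}(\cartesian{X}{1}{n})=\mathcal{L}(\Sigma_{\spaces{X}{1}{n}})$ of scalar-valued functionals, which is moreover isometric by Theorem \ref{thm: continuity diagram}; hence $B_{\mathcal{L}(\cartesian{X}{1}{n})}$ and $B_{\mathcal{L}(\Sigma_{\spaces{X}{1}{n}})}$ correspond, and $\varphi(u_i)-\varphi(v_i)$ transforms to $\hat\varphi(u_i^1\otimes\cdots\otimes u_i^n)-\hat\varphi(v_i^1\otimes\cdots\otimes v_i^n)$. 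This makes condition (1) for $T$ literally the same statement as Definition \ref{def: p-summ} applied to $f_T$, once each tuple $u_i=(u_i^1,\ldots,u_i^n)\in\cartesian{X}{1}{n}$ is sent to the decomposable tensor $u_i^1\otimes\cdots\otimes u_i^n\in\Sigma_{\spaces{X}{1}{n}}$ (and similarly for $v_i$).

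First I would prove $(1)\Leftrightarrow$ ``$f_T$ is $p$-summing''. The nontrivial direction is that ranging the $u_i,v_i$ over all of $\Sigma_{\spaces{X}{1}{n}}$ (arbitrary decomposable tensors, not just those arising as images of a fixed tuple) gives nothing stronger than ranging over $\cartesian{X}{1}{n}$: but the map $\cartesian{X}{1}{n}\to\Sigma_{\spaces{X}{1}{n}}$, $(x_1,\ldots,x_n)\mapsto x_1\otimes\cdots\otimes x_n$, is onto $\Sigma_{\spaces{X}{1}{n}}$ by definition, so the two families of inequalities are identical term-by-term. Next, $(2)$ for $T$ versus the Domination Theorem for $f_T$: here one uses that $B_{\mathcal{L}(X_1,\ldots,X_n)}$ with the weak-$*$ topology is (via the above identification) homeomorphic to $B_{\mathcal{L}(\Sigma_{\spaces{X}{1}{n}})}$ with its weak-$*$ topology, so a regular probability measure on one transports to one on the other, and the integrand $|\varphi(u)-\varphi(v)|^p$ transports to $|\hat\varphi(u^1\otimes\cdots\otimes u^n)-\hat\varphi(v^1\otimes\cdots\otimes v^n)|^p$; thus $(2)$ for $T$ is exactly the domination inequality for $f_T$ evaluated at decomposable tensors, and since domination for $f_T$ is a priori required on all of $\Sigma$, restricting to decomposables is the easy inclusion while the converse is immediate because every pair in $\Sigma$ comes from a tuple.

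For $(3)$, the plan is to start from the factorization of $f_T$ through an $L_p$-space furnished by the Factorization Theorem for $p$-summing $\Sigma$-operators: there are a probability space $(\Omega,\Sigma,\mu)$, a Lipschitz map $\tilde h_f\colon L_p(\mu)\to\ell_\infty^{B_{Y^*}}$, and a ``linear-on-$\Sigma$'' map $\Sigma_{\spaces{X}{1}{n}}\to L_\infty(\mu)$ through which $i_Y\circ f_T$ factors after composing with the inclusion $i_p\colon L_\infty(\mu)\hookrightarrow L_p(\mu)$. Precomposing the $\Sigma$-side map with $\otimes\colon\cartesian{X}{1}{n}\to\Sigma_{\spaces{X}{1}{n}}$ produces the multilinear operator $\nu\colon X_1\times\cdots\times X_n\to L_\infty(\mu)$; one then checks commutativity of the displayed diagram square by square, using $T=f_T\circ\otimes$ on the top and the factorization of $f_T$ on the bottom. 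Conversely, given a diagram as in $(3)$, the multilinear $\nu$ linearizes to $\hat\nu\colon\CTensor{X}{1}{n}{\pi}\to L_\infty(\mu)$, whose restriction to $\Sigma$ together with $\tilde h_f$ exhibits $f_T$ as factoring through $L_p(\mu)$, which by the Factorization Theorem yields $p$-summability of $f_T$, hence $(1)$.

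\textbf{Main obstacle.} The routine transport through Proposition \ref{prop: Sigma morph} is bookkeeping; the genuine content lives entirely in the $\Sigma$-operator Domination and Factorization Theorems of \cite{A FU}, which are quoted but not reproved here. The one point that needs real care (rather than citation) is the surjectivity/compatibility argument showing that quantifying the $p$-summing inequality over \emph{arbitrary} decomposable tensors $u_i,v_i\in\Sigma_{\spaces{X}{1}{n}}$ is equivalent to quantifying over images of $n$-tuples --- i.e.\ that nothing is lost or gained in passing between ``$(u_i^1,\ldots,u_i^n)$'' and ``$u_i^1\otimes\cdots\otimes u_i^n$''. This is true essentially by the definition of $\Sigma$ as the image of $\otimes$, but it must be stated cleanly so that the weak-$*$ compact sets $B_{\mathcal{L}(X_1,\ldots,X_n)}$ and $B_{\mathcal{L}(\Sigma_{\spaces{X}{1}{n}})}$ are correctly identified (including that the identification is a homeomorphism for the weak-$*$ topologies, which is what allows the measure in $(2)$ and the probability space in $(3)$ to be transported without change).
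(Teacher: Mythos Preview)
Your proposal is correct and follows essentially the same approach as the paper. In fact, the paper offers no proof beyond the sentence ``we obtain the following characterizations, directly from the already mentioned results on $\Sigma$-operators'' and a \qed; your write-up simply spells out the transport through the isometry of Proposition~\ref{prop: Sigma morph} and the surjectivity of $\otimes$ onto $\Sigma_{\spaces{X}{1}{n}}$ that the paper leaves implicit, and then defers the substance to the Domination and Factorization Theorems of \cite{A FU}, exactly as intended.
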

\qed

In the linear case, $n=1$,
Theorem \ref{thm: equiv p summ multi op} recovers the fundamental equivalent notions of $p$-summability for a linear operator, proved by A. Pietsch:   (1) is usually taken as the definition of a $p$-summing operator; (2) is known as the {\sl  Domination theorem of $p$-summing operators} and (3)  as  the {\sl Factorization Theorem of $p$-summing operators} (see \cite[2.12,2.13]{DJT}).

The validity of such fundamental equivalences in the more general context of multilinear mappings, makes them  a reasonable  proposal to generalize   $p$-summability to this setting. In this way, define:

  A multilinear operator $T\in\mathcal{L}(X_1,\ldots,X_m; Y)$ is {\bf Lipschitz  $p$-summing} if it satisfies any of the equivalent conditions in Theorem \ref{thm: equiv p summ multi op}, \cite[Definition 3.1]{A FU}.

\end{document}